\newcommand{\Var}{\mathop{\mathrm{Var}}}
\def\P{{\mathbf{P}}}
\def\NP{{\mathbf{NP}}}
\def\PCP{{\mathbf{PCP}}}
\def\BPCP{{\mathbf{BPCP}}}
\def\BGPCP{{\mathbf{BGPCP}}}
\def\GPCP{{\mathbf{GPCP}}}
\def\EP{{\mathbf{EP}}}
\def\TEP{{\mathbf{TEP}}}
\def\HWP{{\mathbf{HWP}}}
\def\CA{{\mathcal{A}}}
\def\Hom{{\mathop{\mathrm{Hom}}}}
\title{The Post Correspondence Problem in groups}
\author[]{Alexei Myasnikov}
\address{Department of Mathematics, Stevens Institute of Technology,
Hoboken, NJ, 07030 USA} \email{amiasnikov@gmail.com}
\author[]{Andrey Nikolaev}
\address{Department of Mathematics, Stevens Institute of Technology,
Hoboken, NJ, 07030 USA} \email{anikolae@stevens.edu}
\author[]{Alexander Ushakov}
\address{Department of Mathematics, Stevens Institute of Technology,
Hoboken, NJ, 07030 USA} \email{aushakov@stevens.edu}
\thanks{The work of the first and third author was partially supported by NSF grant DMS-0914773.}
\begin{document}

\maketitle

\begin{abstract}
We generalize the classical Post correspondence problem ($\PCP_n$) and its non-homogeneous  variation  ($\GPCP_n$) to non-commutative groups and study  the computational complexity of these  new problems.
We observe that $\PCP_n$ is closely related to the equalizer problem in groups, while $\GPCP_n$ is connected to the double twisted conjugacy problem for endomorphisms. Furthermore, it is shown that one of the strongest forms of the word problem in a group $G$ (we call it the {\em hereditary word problem}) can be reduced to $\GPCP_n$ in $G$ in polynomial time.  

The main results are that $\PCP_n$ is decidable in a finitely generated nilpotent group in polynomial time, while $\GPCP_n$ is undecidable in any group containing free non-abelian subgroup (though the argument is very different from the classical case of free semigroups). We show that the double endomorphism twisted conjugacy problem is undecidable in free groups of sufficiently large finite rank. We also consider the bounded $\PCP$ and observe that it is in $\NP$  for any group with $\P$-time decidable word problem, meanwhile it is $\NP$-hard in any group containing free non-abelian subgroup. In particular,  the bounded $\PCP$ is $\NP$-complete in  non-elementary hyperbolic groups  and  non-abelian right angle Artin groups.

\noindent
{\bf Keywords.} Post correspondence problem, nilpotent groups, solvable groups, hyperbolic groups, linear groups, right angle Artin groups, twisted conjugacy problem.

\noindent
{\bf 2010 Mathematics Subject Classification.} 03D15, 20F65, 20F10.
\end{abstract}


\section{Introduction}

\subsection{Motivation}

In this paper, following \cite{MNU1}  we continue our research on  non-commutative discrete (combinatorial) optimization. Namely,  we define the Post correspondence problem ($\PCP$) for an arbitrary algebraic structure and then study  this problem  together with its variations for an arbitrary group $G$.   The  purpose of this research is threefold.
Firstly, we approach  $\PCP$ in a very different context, facilitating  a deeper understanding of the nature of $\PCP$ problems in general. Secondly, we try to tackle  several interesting algorithmic problems in group theory that are related to $\PCP$, whose   time complexity is unknown. Thirdly, we hope to   unify several  algebraic  techniques  through the framework of $\PCP$ problems. We refer to \cite{MNU1}  for the initial motivation, the set-up of the problems, and initial facts on non-commutative discrete optimization.

We would like to thank E.~Ventura for his valuable remarks.

\subsection{The Post correspondence problem in algebra}

Let $\CA$ be an arbitrary algebraic structure in a language $L$ (for example, a  semigroup, a group,  or a ring). The Post correspondence problem for $\CA$ (abbreviated as $\PCP(\CA)$) asks to decide when given two tuples of equal length $u = (u_1, \ldots,u_n)$ and $v = (v_1, \ldots,v_n)$ of elements of $\CA$  if there is a term $t(x_1, \ldots,x_n)$  in the language $L$ such that $t(u_1, \ldots,u_n) = t(v_1, \ldots,v_n)$ in $\CA$. In 1946 Post introduced this problem in the case of free monoids (free semigroups)  and proved that it is undecidable \cite{Post:1946}. 
Since then $\PCP$ took its prominent place in the theory of algorithms and theoretical computer science.

There are some interesting variations of this problem especially in the case of semigroups and groups, which we discuss in detail in Section \ref{se:PCP}.  Here we mention only one,  designed specifically for  (semi)groups,  to which we refer as a general or a non-homogeneous Post correspondence problem ($\GPCP$). In this case the terms $t$ are just words in a fixed alphabet $X$ (or $X \cup X^{-1}$ in the case of groups), and the problem is to decide when given two tuples $u$ and $v$  of elements in a (semi)group $S$ as above and two extra elements $a, b \in S$ if there is a term $t(x_1, \ldots,x_n)$ such that $at(u_1, \ldots,u_n) = b t(v_1, \ldots,v_n)$ in $S$. 

Above we described a {\em decision} version of $\PCP$ and $\GPCP$ in a semigroup (or a group) which requires to check if there exists a   term   $w$,   called a {\em solution}, for a given instance of the problem. The {\em search} variation of the problem  asks to find a solution (if it exists) for a given instance. Even more interesting problem is to describe all solutions to the given instance of the problem. We will have more to say about this in  due course.

\subsection{Algebraic meaning of $\PCP$ and $\GPCP$ in groups}

Some connections between Post correspondence problems and classical questions in groups are known. We mention some of them here and refer to Sections \ref{se:connections} and \ref{se:GPCP} for details.

The standard  (homogeneous) $\PCP$ in groups is closely related to the problem of finding  the equalizer $E(\phi,\psi)$ of two group homomorphisms $\phi,\psi : H \to G$.   This equalizer  is defined as $E(\phi,\psi) = \{w \in H \mid \phi(w) = \psi(w)\}$.  In particular, $\PCP$ in a group $G$ is  the same as to decide if the equalizer of a given pair of  homomorphisms $\phi, \psi \in Hom(H,G)$, where $H$ is a free group of finite rank in the variety $\Var(G)$ generated by $G$,  is trivial or not (see Section \ref{se:connections} for details).  Indeed, in this case every tuple $u = (u_1, \ldots,u_n)$ of elements of $G$ gives rise to a homomorphism $\phi_u$ from a free group $H$ with basis $x_1, \ldots,x_n$ in the variety $\Var(G)$ such that $\phi_u(x_1) =  u_1, \ldots, \phi_u(x_n) = u_n$, and vice versa. The equalizer $E(\phi_u,\phi_v)$ describes all  solutions $w$ for the instance $u,v$.

It seems that the general Post correspondence problem $\GPCP$ for groups is even more interesting than the  standard  $\PCP$. Indeed, first of all $\GPCP$  is right in the  midst of the endomorphic double twisted conjugacy problem  in groups, which is one of the  more difficult and less studied  group theoretic conjugacy-type  problems.  In fact, it is shown in Section~\ref{sec:twisted} (Proposition \ref{pr:reduction-GPCP-2}) that the double endomorphism twisted conjugacy problem in a relatively free group in $\Var(G)$ is equivalent to $\GPCP(G)$, and, in general, the double endomorphism twisted conjugacy problem in $G$ $\P$-time reduces to $\GPCP(G)$. Furthermore, we prove in Section \ref{se:GPCP} that $\GPCP$ in a given group $G$ is intimately related to the word problem in $G$. Namely we show that the hereditary word problem ($\HWP$) in $G$ can be reduced in polynomial time to $\GPCP$ in $G$. Here  $\HWP$ in $G$  asks to decide when given an element $w \in G$ and a finite subset $R \subseteq G$ if $w = 1$ in the quotient  $H = G/\langle R\rangle_G$ of the group $G$ by the normal subgroup $\langle R\rangle_G$ generated by $R$. Therefore, if $\GPCP$ is decidable in $G$ then there is a {\em uniform} algorithm to decide the word problem in every finitely presented (relative to $G$) quotient of $G$. Further, since decidability of $\GPCP$ in $G$ is inherited by all subgroups of $G$ it implies the uniform decidability of $\HWP$ in every subgroup of $G$ (even every section of $G$).   Thus, a decision algorithm for $\GPCP$ in $G$  is very powerful and it  gives a lot of information about the group $G$. Notice that finitely generated abelian and nilpotent   groups have  decidable $\HWP$. 

\subsection{Results}

In Section \ref{se:GPCP} we show that $\GPCP$ is undecidable in every non-abelian free group, as well as in every group containing free non-abelian subgroups. In particular,  $\GPCP$ is undecidable in the following groups: non-elementary hyperbolic, non-abelian right angled Artin,  braid groups $B_n$ ($n \geq 3$), non-solvable defined by a single relator (thus all one-relator groups with  more than two generators), etc.  A similar argument shows that the bounded Post correspondence  problem in all groups mentioned above is $\NP$-complete. Here in the bounded version of $\GPCP$ one is looking only for solutions (the words $t(x_1, \ldots,x_n)$) whose  length is bounded by a given number.   We emphasize that the argument used to prove the undecidability results here has nothing to do with the original argument of undecidability of $\PCP$ or $\GPCP$ in free non-commutative semigroups, even though all the groups mentioned above contain such semigroups. In fact, it is still unclear if the $\PCP$ in a free semigroup can be reduced to $\PCP$ in a free non-abelian group. Furthermore, it is still one of the most intriguing open problems whether $\PCP$ in a free non-abelian group is decidable or not. 

As a corollary of the undecidability of $\GPCP$ in free non-abelian groups we show that the double endomorphism twisted conjugacy problem in free groups $F_n$ of rank $n \geq 32$ is undecidable. Whether  the double endomorphism twisted conjugacy problem  is decidable or not in free non-abelian groups of smaller rank remains to be seen.

We also show that free solvable groups $S_{m,n}$ of class $m\geq 3$ and sufficiently high rank $n$ have undecidable double endomorphism twisted conjugacy problem, as well as $\GPCP$. This result is based on examples of finitely presented solvable groups with undecidable word problem constructed by Kharlampovich in \cite{Kharlampovich:1981}.

In the opposite direction we show in Section \ref{se:nilpotent} that $\PCP$ is decidable in polynomial time  in every finitely generated nilpotent group $G$. This is the best known positive result up to date on $\PCP$ in groups. 


\section{Post correspondence problems}
\label{se:PCP}

\subsection{The classical Post correspondence problem} \label{se:semigroups}

Let $A$ be a finite alphabet with $|A| \geq 2$. Denote by $A^\ast$ the free monoid with basis $A$  viewed as the set of all words in $A$ with concatenation as the multiplication.  Let $X$ be an infinite countable set of variables and $X^\ast$ the corresponding free monoid.

\medskip
\noindent{\bf The classical  Post correspondence problem ($\PCP$) in $A^*$:}
Given a finite set of pairs $(g_1,h_1), \ldots , (g_n,h_n)$ of elements of $A^\ast$  determine if there is a non-empty word $w(x_1, \ldots,x_n) \in X^\ast$  such that  $w(g_1, \ldots,g_n) = w(h_1, \ldots,h_n) $ in $A^\ast$.

\medskip
Post showed in  \cite{Post:1946} that the problem is undecidable (see \cite{Sipser:2005} for a simpler proof). 

Nowadays there are several variations of  $\PCP$ in $A^*$, the following {\em restricted} version   is the most typical. 

\medskip
\noindent{\bf $\PCP_n$ in $A^*$:}
Let $n$ be a fixed positive integer. Given a finite sequence of pairs $(g_1,h_1), \ldots , (g_m,h_m)$ of $G$, where $m \leq n,$ determine if there is a non-empty word $w(x_1, \ldots,x_m) \in X^*$   such that  $w(g_1, \ldots,g_m) = w(h_1, \ldots,h_m)$ in $A^*$.

\medskip
Breaking $\PCP$ into a collection of the restricted problems $\PCP_n$  makes the boundary between decidable and undecidable more clear: $\PCP_n$ in $A^*$  is decidable for $n \leq 3$, and undecidable for $n\geq 7$, see \cite{EKR,Halava00binary(generalized),Matiyasevich:1996}.

Another version of interest is the general $\GPCP$ in the free monoid $A^*$, in which case an input to $\PCP$  contains a sequence of pairs $(g_1,h_1), \ldots , (g_n,h_n)$ as above and also four elements $a,b,c,d \in A^*$, while  the task is to find a word $w(x_1, \ldots,x_n) \in X^*$   such that  $aw(g_1, \ldots,g_n)b = cw(h_1, \ldots,h_n)d$ in $A^*$. This problem is also undecidable in $A^*$. 

There are {\em marked} variations of the $\PCP_n$  in $A^*$, in which case for each pair $(g_i,h_i)$ in the instance the initial letters in $g_i$ and $h_i$ are not equal.  These problems are known to be decidable \cite{Halava-Hirvensalo-deWolf:1999}.  We refer to a paper \cite{Halava-Harju:2001} for
some recent developments on the Post correspondence problem in free semigroups.

Finishing our short survey of known results we would like to mention that  $\PCP$ is undecidable in a free non-abelian semigroup as well (the same argument as for free monoids). Hence  semigroup version of $\PCP$ is also undecidable in semigroups that contain free non-abelian subsemigroups, in particular, in groups containing free non-abelian subgroups, or solvable not virtually nilpotent groups (they  contain free non-abelian subsemigroups).  

In what follows we focus only on the group theoretic versions of the Post corresponding problems $\PCP$ and $\GPCP$ in groups, which is different from the original semigroup version since one has to take inversion of elements into account. 

\subsection{The Post correspondence problem in groups}

Throughout the whole paper we use the following notation:  $G$ is an arbitrary  fixed group generated by a finite set $A$,   
$F(X)$ a free group with basis $X = \{x_1, \ldots, x_n\}$.  We view elements of $F(X)$ as reduced words in $X \cup X^{-1}$. 
Sometimes we denote $F(X)$ as $F(x_1, \ldots,x_n)$, or simply as $F_n$. 

As we mentioned earlier,  the  group theoretic version of the Post corresponding problem involves terms (words) with inversion. 

\medskip
\noindent{\bf The Post correspondence problem ($\PCP$) in a group  $G$:}
Given a finite set of pairs $(g_1,h_1), \ldots , (g_n,h_n)$ of elements of $G$ determine if there is a word $w(x_1, \ldots,x_n) \in F(x_1, \ldots,x_n)$, which is not an identity of $G$,   such that  $w(g_1, \ldots,g_n) = w(h_1, \ldots,h_n)$ in $G$.

\medskip
Several comments are in order here. Recall that an identity on $G$ is a word $w(x_1, \ldots,x_n)$ such that $w(g_1, \ldots,g_n) =  1$ in $G$ for any $g_1, \ldots, g_n \in G$. If the group $G$ does not have non-trivial identities then the requirement that $w$ is not an identity becomes the same as in the original  Post formulation  that $w$ is non-empty.  Meanwhile, any non-trivial identity $w(x_1, \ldots,x_n)$ in $G$ gives a solution to any instance of $\PCP$ in $G$, which is not very interesting. Sometimes we refer to words $w$ which are identities in $G$ as to   {\em trivial} solutions of $\PCP$ in $G$, while the solutions which are not identities in $G$  are termed  {\em non-trivial}. In this regard $\PCP(G)$ asks to find a non-trivial solution to $\PCP$ in $G$.

In the sequel by $\PCP$ for  a group $G$ we always, if not said otherwise,  understand the group theoretic (not the semigroup one) version of $\PCP$ stated above. By definition $\PCP(G)$ depends on the given generating set of $G$, however it is easy to see that $\PCP(G)$ for different finite generating sets are  polynomial time equivalent to each other, i.e., each one reduces to the other in polynomial time. Since in all our  considerations  the generating sets are finite we omit them from notation and write  $\PCP(G)$.

Similar to the classical case one can define  the restricted version $\PCP_n$ for a group $G$, in which case the number of pars in each instance of $\PCP_n$ is bounded by $n$,   and the general one $\GPCP$ (or $\GPCP_n$), where there are some constants involved.  Since the general version is of crucial interest for us we state it precisely.

\medskip
\noindent
{\bf The general Post correspondence problem ($\GPCP$) in  a group $G$:}  given a finite sequence  of pairs $(g_1,h_1), \ldots , (g_n,h_n)$ and two  pairs $(a_1,b_1)$ and $(a_2,b_2)$ of elements of $G$ (called the {\em constants} of the instance)   determine if there is a word $w(x_1, \ldots,x_n) \in F(x_1, \ldots,x_n)$ such that  $a_1w(g_1, \ldots,g_n)b_1 = a_2w(h_1, \ldots,h_n)b_2$ in $G$.

\medskip
Two lemmas are due here. 

\begin{lemma}
For any group $G$ $\GPCP(G)$ is linear time equivalent to the restriction of $\GPCP(G)$ where the constants $b_1, b_2,a_2$ are all equal to 1. 
\end{lemma}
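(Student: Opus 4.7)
The plan is to prove the two directions of the equivalence by a direct reduction between instances, using conjugation to absorb the ``extra'' constants. One direction is essentially free: the restricted variant in which $b_1=b_2=a_2=1$ is a literal special case of $\GPCP(G)$, so there is an identity reduction from the restricted problem to the full problem.

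For the nontrivial direction I would start from an arbitrary instance $((g_1,h_1),\ldots,(g_n,h_n); (a_1,b_1),(a_2,b_2))$ and manipulate the defining equation
\[
a_1\,w(g_1,\ldots,g_n)\,b_1 \;=\; a_2\,w(h_1,\ldots,h_n)\,b_2.
\]
Left-multiplying by $a_2^{-1}$ and right-multiplying by $b_2^{-1}$, this is equivalent to
\[
a\,w(g_1,\ldots,g_n)\,b \;=\; w(h_1,\ldots,h_n),\qquad a := a_2^{-1}a_1,\; b:= b_1b_2^{-1}.
\]
The trailing $b$ on the left is the only obstruction; the key trick is to absorb it via a conjugation applied uniformly to the $g_i$. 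Define a new tuple $g_i' := b^{-1}g_i b$ for $i=1,\ldots,n$. Since $w \mapsto w(g_1,\ldots,g_n)$ is the image under the homomorphism $F(X) \to G$ sending $x_i \mapsto g_i$, conjugation commutes with evaluation and gives $w(g_1',\ldots,g_n') = b^{-1} w(g_1,\ldots,g_n) b$, i.e.\ $w(g_1,\ldots,g_n)\,b = b\,w(g_1',\ldots,g_n')$. Substituting into the displayed equation yields
\[
(ab)\,w(g_1',\ldots,g_n') \;=\; w(h_1,\ldots,h_n),
\]
which is an instance of the restricted $\GPCP(G)$ with tuples $((g_i',h_i))$ and constants $(ab,1),(1,1)$, having the \emph{same} $w$ as a solution. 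Each step is reversible, so solutions correspond bijectively.

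It remains to check the time bound. Writing $L$ for the input size of the original instance (the total length of all $g_i$, $h_i$, $a_j$, $b_j$ as words in the generators of $G$), the quantities $a$, $b$, $ab$ and each $g_i'=b^{-1}g_ib$ are obtained by a constant number of concatenations and a formal inversion, so they have total length $O(L)$ and are produced in $O(L)$ time. Thus the forward reduction runs in linear time; combined with the trivial backward reduction, this gives the claimed linear-time equivalence. The only point one has to be a little careful about is the compatibility of substitution with conjugation, which is exactly the homomorphism property noted above, and nothing beyond that presents an obstacle.
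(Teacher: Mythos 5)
Your proposal is correct and follows exactly the paper's argument: first absorb $a_2$ and $b_2$ by left/right multiplication, then eliminate the trailing constant $b=b_1b_2^{-1}$ by conjugating the $g_i$ (the paper writes $g_i^b$ for your $b^{-1}g_ib$), arriving at the same restricted instance with constant $ab$. The only difference is that you spell out the linear-time bookkeeping and the trivial converse reduction, which the paper leaves implicit.
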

\begin{proof}
Indeed, in the notation above notice that $a_1w(g_1, \ldots,g_n)b_1 = a_2w(h_1, \ldots,h_n)b_2$ in $G$ if and only if 
$$a_2^{-1}a_1w(g_1, \ldots,g_n)b_1b_2^{-1} = w(h_1, \ldots,h_n),
$$
 so $\GPCP$  in $G$ is equivalent to $\GPCP$ with $a_2 = 1, b_2 = 1$. Moreover,  
 $$aw(g_1, \ldots,g_n)b = w(h_1, \ldots,h_n)$$
  in $G$ if and only if 
 $$abb^{-1}w(g_1, \ldots,g_n)b = w(h_1, \ldots,h_n),$$
  i.e., 
  $$abw(g_1^b, \ldots,g_n^b) = w(h_1, \ldots,h_n).$$
   Hence $\GPCP(G)$  is linear time equivalent to $\GPCP(G)$ with $b_1= a_2 = b_2 = 1$, as claimed.  \end{proof}
  From now on we often assume that in $\GPCP$ each instance  has the constants $b_1, b_2,a_2$ are all equal to 1, in which case we denote $a_1$ by $a$ and term it the {\em constant} of the instance.  
    
     \begin{lemma} \label{le:reduction-GPCP}
   For any group $G$ and for any instance $(g_1,h_1), \ldots , (g_n,h_n), a$ of $\GPCP(G)$ all  solutions $w$ to this instance can be described as $w = w_0u$, where $w_0$ is a particular fixed solution to this instance and $u$ is an arbitrary (perhaps, trivial) solution to $\PCP(G)$ for the instance $(g_1,h_1), \ldots , (g_n,h_n)$.
  \end{lemma}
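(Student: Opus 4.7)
The plan is to view the problem homomorphism-theoretically. Let $\phi,\psi : F(X)\to G$ be the homomorphisms determined by $\phi(x_i)=g_i$ and $\psi(x_i)=h_i$. Then a word $w\in F(X)$ is a solution of the given $\GPCP$ instance iff $a\phi(w)=\psi(w)$ in $G$, while $u\in F(X)$ is a (possibly trivial) solution of the associated $\PCP$ instance iff $\phi(u)=\psi(u)$ in $G$.

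Now fix a particular solution $w_0$ of the $\GPCP$ instance, so $a\phi(w_0)=\psi(w_0)$. First I would show every solution has the desired form. Let $w$ be an arbitrary solution and set $u:=w_0^{-1}w\in F(X)$. Then
\[
\psi(u)=\psi(w_0)^{-1}\psi(w)=\bigl(a\phi(w_0)\bigr)^{-1}\bigl(a\phi(w)\bigr)=\phi(w_0)^{-1}\phi(w)=\phi(u),
\]
so $u$ is a (possibly trivial) $\PCP$ solution, and clearly $w=w_0u$.

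Conversely, for completeness I would check that every $w=w_0u$ with $u$ a $\PCP$ solution is indeed a $\GPCP$ solution: since $\phi$ and $\psi$ are homomorphisms,
\[
a\phi(w_0u)=a\phi(w_0)\phi(u)=\psi(w_0)\phi(u)=\psi(w_0)\psi(u)=\psi(w_0u).
\]

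There is essentially no obstacle here; the only thing to be careful about is that the cancellation $w_0^{-1}w$ is performed in the free group $F(X)$ (where it is well defined) and not in $G$, so that $u$ is a genuine word one can plug into $\phi$ and $\psi$. The argument also explains why one must allow $u$ to be trivial: when $w=w_0$ itself, the corresponding $u$ is the empty word.
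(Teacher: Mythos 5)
Your proof is correct and follows essentially the same route as the paper's: both fix a particular solution $w_0$, set $u = w_0^{-1}w$, and verify by cancellation that $u$ solves the homogeneous $\PCP$ instance (you merely phrase the substitution $w(g_1,\ldots,g_n)$ as a homomorphism $\phi(w)$). The only addition is your explicit check of the converse direction, that every $w_0u$ is again a $\GPCP$ solution, which the paper leaves implicit but which is needed for the ``all solutions'' claim to be a genuine description.
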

   \begin{proof}
   Suppose $w_0$ is a particular fixed solution to $\GPCP(G)$ for the instance $(g_1,h_1), \ldots , (g_n,h_n), a$, so $aw_0(g_1, \ldots,g_n)  = w_0(h_1, \ldots,h_n)$. If $w$ is an arbitrary solution to the same instance in $G$ then  $aw(g_1, \ldots,g_n)  = w(h_1, \ldots,h_n)$, so 
   $$w_0^{-1}(g_1, \ldots,g_n)w(g_1, \ldots,g_n) = w_0^{-1}(h_1, \ldots,h_n)w(h_1, \ldots,h_n),$$
   hence  $u = w_0^{-1}w$ solves  $\PCP(G)$ for the instance $(g_1,h_1), \ldots , (g_n,h_n)$. Therefore, $w = w_0u$ as claimed.
 \end{proof}
 
 Lemma \ref{le:reduction-GPCP} shows that to get all solutions of $\GPCP$ in $G$ for a given instance  one needs only to find a particular solution of $\GPCP(G)$ and all solutions of $\PCP(G)$ for the same instance. In view of this we  sometimes refer   to  $\GPCP$ as the  {\em non-homogeneous} $\PCP$, and to $\PCP$  as to the  {\em homogeneous} one.  

As usual in discrete optimization there  are several other standard variations of  $\PCP$ problems: {\em bounded, search}, and {\em optimal}. We mention them briefly now  and refer to \cite{MNU1} for a thorough discussion of these types of problems in groups.  The {\em bounded} version of $\PCP$ (or $\GPCP$) requires that the word $w$ in question should be of length bounded from above by a given number $M$.  
We denote these versions by $\BPCP(G)$ or $\BGPCP(G)$. The {\em search} variation of $\PCP$ (or $\GPCP$) asks to find a    word $w$ that gives  a non-trivial solution to a given instance of the problem (if such a solution exists). The {\em optimization} version of $\PCP$ (or $\GPCP$) is a variation of the search problem, when one is asked to find a solution that satisfies some ``optimal'' conditions. In our case, if not said otherwise, the optimal condition is to find a shortest possible word $w$ which is a solution to the given instance of the problem.

\section{Connections to group theory}
\label{se:connections}

\subsection{$\PCP_n$ and the equalizer problem}

Let as above $G$ be a fixed arbitrary group with a finite generating set $A$, $F_n = F(x_1, \ldots,x_n)$  a free group with basis $X= \{x_1, \ldots,x_n\}$.

An $n$-tuple of elements $g= (g_1, \ldots,g_n) \in G^n$ gives  a homomorphism $\phi_g:F_n \to G$ where $\phi_g(x_1) = g_1, \ldots, \phi_g(x_n) = g_n$. And vice versa, every homomorphism $F_n \to G$ gives a tuple as above. In this sense each instance 
$(u_1,v_1), \ldots, (u_n,v_n)$ of  $\PCP(G)$ can be uniquely described by a pair of homomorphisms $\phi_u,\phi_v:F_n\to G$, where $u = (u_1, \ldots,u_n), v = (v_1, \ldots, v_n)$.  In this case we refer to such a pair of homomorphisms  as an instance of $\PCP$ in $G$.

Now given groups $H,G$ and two homomorphism $\phi, \psi \in \Hom(H,G)$  one can define the   equalizer $E(\phi,\psi)$ of $\phi,\psi$  as
\begin{equation}\label{eq:equalizer}
E(\phi,\psi) = \{w \in H \mid w^\phi = w^\psi\},
\end{equation}
which is obviously a subgroup of $H$.
If $G$ does not have non-trivial identities then all non-trivial words from  $E(\phi,\psi)$ give all solutions to $\PCP$    in $G$ for a given instance $(\phi, \psi) \in \Hom(F_n,G)$. However, if $G$ has non-trivial identities then some words from $E(\phi,\psi)$ are identities which are not solutions to $\PCP(G)$.   To accommodate all the cases at once we suggest to replace the free group $F_n$ above by the  free group $F_{G,n}$ in the variety $\Var(G)$ of rank $n$ with basis $\{x_1, \ldots, x_n\}$. Then similar to the above every tuple $u \in G^n$ gives rise to a homomorphism $\phi_u:F_{G,n} \to G$, where $\phi(x_1) = u_1, \ldots, \phi(x_n) = u_n$, and non-trivial elements of the equalizer $E(\phi_u,\phi_v)$ describe all solutions of $\PCP(G)$ for the instance $u,v \in G^n$.  This connects $\PCP_n$ in $G$ with the equalizers of homomorphisms  from $\Hom(F_{G,n},G)$. 

There  are two general algorithmic problems in groups concerning equalizers. 

\medskip
\noindent
{\bf  The triviality of the equalizer problem $ (\TEP(H,G))$  for groups $H,G$:} Given two homomorphisms $\phi, \psi \in \Hom(H,G)$ decide if the subgroup $E(\phi,\psi)$ in $H$ is trivial or not.

\medskip
\noindent
{\bf  The equalizer problem $ (\EP(H,G))$  for groups $H,G$:} Given two homomorphisms $\phi, \psi \in \Hom(H,G)$ find the equalizer $\EP(H,G)$. In particular, if  $\EP(H,G)$ is finitely generated then find a finite generating set of $E(\phi,\psi)$.

\medskip
The formulation above needs some explanation on how we mean ``to find''  a subgroup in a group. If the subgroup is finitely generated then ``to find'' usually means to list a finite set of generators. It might happen that the subgroup is not finitely generated, but allows a finite set of generators as a normal subgroup, or as a module under some action. In  this case to solve $ \EP(H,G)$ one has to list  a finite set of these generators of $ \EP(H,G)$. In this paper we consider equalizers of homomorphisms of finitely generated nilpotent groups, so in this event they  are finitely generated and the problem of describing equalizers becomes well-stated. 

 Equalizers $E(\phi,\psi)$ were studied before, but  mostly in the case when $H = G$ and $\phi, \psi$ are automorphisms of $G$. 
There are few results on equalizers of endomorphisms in groups.
Goldstein and Turner have proved in \cite{Goldstein-Turner:1986} that the equalizer
of two endomorphisms of $F_n$ is a finitely generated subgroup in the case one of the
two maps is injective. However, is it not known whether there is an algorithm to decide if the equalizer of two endomorphisms in a free group $F_n$ is trivial or not.
Ciobanu, Martino and Ventura showed that generically equalizers of endomorphisms in free groups are trivial \cite{Ciobanu08thegeneric}, so on most  inputs in a free non-abelian group $F$  $\PCP(F)$ does not have a solution, in this sense $\PCP(F)$ is generically decidable.

We summarize the discussion above in the following easy lemma.  

\begin{lemma}
Let $G$ be a group. Then the following holds for any natural $n>0$:
\begin{itemize}
\item [1)] $\PCP_n(G)$ is equivalent (being just a reformulation) to $\TEP$ for homomorphisms from $Hom(F_{G,n},G)$.
\item [2)] Finding all solutions  for a given instance of $\PCP_n(G)$  is equivalent (being just a reformulation) to $\EP(F_{G,n},G)$ for the same instance. 
\end{itemize}
\end{lemma}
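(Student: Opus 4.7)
The statement is essentially a dictionary lemma and the proof amounts to making precise the correspondence sketched in the discussion preceding it. My plan is to establish a natural bijection between instances of $\PCP_n(G)$ and pairs of homomorphisms in $\Hom(F_{G,n},G)$, then verify that under this bijection the non-trivial solutions to $\PCP_n(G)$ match exactly the non-identity elements of the equalizer.

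First, I would fix notation: let $X = \{x_1,\ldots,x_n\}$ be the basis of $F_{G,n}$, the relatively free group of rank $n$ in $\Var(G)$. The universal property of $F_{G,n}$ gives a bijection $G^n \leftrightarrow \Hom(F_{G,n},G)$ sending a tuple $u = (u_1,\ldots,u_n)$ to the unique homomorphism $\phi_u$ with $\phi_u(x_i) = u_i$. Thus an instance $(u_1,v_1),\ldots,(u_n,v_n)$ of $\PCP_n(G)$ is in natural bijection with a pair $(\phi_u,\phi_v) \in \Hom(F_{G,n},G)^2$.

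Next, I would translate ``being a (non-trivial) solution'' into equalizer language. A word $w \in F(X)$ is a solution to the $\PCP_n(G)$ instance precisely when $w(u_1,\ldots,u_n) = w(v_1,\ldots,v_n)$ in $G$, i.e., $\phi_u(w) = \phi_v(w)$; and $w$ is a trivial solution (an identity of $G$) precisely when $w$ represents the identity element of $F_{G,n}$, since the elements of $F_{G,n}$ are by definition equivalence classes of words in $F(X)$ modulo all identities of $G$. Hence, viewed in $F_{G,n}$, the set of solutions is exactly $E(\phi_u,\phi_v)$, and the non-trivial solutions correspond exactly to the non-identity elements of $E(\phi_u,\phi_v)$.

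Part (1) now follows immediately: a given instance of $\PCP_n(G)$ has a non-trivial solution iff $E(\phi_u,\phi_v) \neq \{1\}$, which is precisely the question decided by $\TEP$ for $\Hom(F_{G,n},G)$. Part (2) follows by the same correspondence, since describing all solutions to the instance is the same as describing all non-identity elements of $E(\phi_u,\phi_v)$, i.e., solving $\EP(F_{G,n},G)$. There is no real obstacle here; the only point that requires a line of care is the handling of identities of $G$, which is the whole reason for passing from $F_n$ to the relatively free group $F_{G,n}$, and this is exactly what makes the bijection between ``non-trivial solutions'' and ``non-identity elements of the equalizer'' clean in both directions.
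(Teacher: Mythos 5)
Your proposal is correct and follows exactly the route the paper takes: the paper states this lemma as a summary of the immediately preceding discussion, which sets up the same bijection $u \mapsto \phi_u$ between tuples and $\Hom(F_{G,n},G)$ and identifies non-trivial solutions with non-identity elements of $E(\phi_u,\phi_v)$, using the passage to the relatively free group $F_{G,n}$ precisely to absorb the identities of $G$. Your write-up just makes that discussion explicit, so there is nothing to add.
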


\subsection{$\GPCP$ and the double twisted conjugacy}
\label{sec:twisted}

Let $\phi, \psi$ be two fixed automorphisms of a group $G$.  Two elements $u, v \in G$ are termed  {\em $(\phi,\psi)$-double-twisted conjugate} if there is an element $w \in G$ such that  $uw^\phi = w^\psi v$. In particular, when $\psi = 1$ then $u$ and $v$ are called {\em $\phi$-twisted conjugate}, while in the case $\phi = \psi = 1$  $u$ and $v$ are just usual conjugates of each other. The twisted (or double twisted) conjugacy problem in $G$ is to decide whether or not two  given elements $u, v \in G$ are twisted (double twisted) conjugate  in $G$  for a fixed pair of automorphisms $\phi, \psi \in \mathop{\mathrm{Aut}}(G)$. Observe, that since $\psi$ has the inverse the $(\phi,\psi)$-double-twisted conjugacy problem reduces to  $\phi\psi^{-1}$-twisted conjugacy problem, so in the case of automorphisms it is sufficient to consider only twisted conjugacy problem. This  problem is much studied in groups, we refer to
\cite{Ventura-Romankov:2009, Romankov:2010, Romankov:2011, BMMV, Troitsky, Fel'shtyn,Fel'shtyn-Leonov-Troitsky} for some recent results.  

Much stronger versions of the problems above appear when one replaces automorphisms by arbitrary endomorphisms $\phi, \psi  \in End(G)$.  Not much is known about  double twisted conjugacy problem in groups with respect to endomorphisms.    

The next statement (which follows from the discussion  above) relates the  double-twisted conjugacy problem for endomorphisms to the general Post correspondence problem. 

\begin{proposition} \label{pr:reduction-GPCP-2}
Let $G$ be a group generated by a finite set $A = \{a_1, \ldots,a_n\}$. Then the following holds:
\begin{itemize}
\item [1)] The double-twisted conjugacy problem for endomorphisms in $G$ is linear time reducible to $\GPCP_n(G)$.
\item [2)]  If $G$ is relatively free with basis $A$ then the double-twisted conjugacy problem for endomorphisms in $G$ is linear time equivalent  to $\GPCP_n(G)$.
\end{itemize}
\end{proposition}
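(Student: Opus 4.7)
The plan is to realize both reductions via the obvious dictionary between $n$-tuples of elements of $G$ and endomorphisms determined on the basis $A$.

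For part 1), suppose we are given endomorphisms $\phi,\psi \in \mathop{\mathrm{End}}(G)$ and elements $u,v \in G$ and we want to decide if there is some $w \in G$ with $uw^\phi = w^\psi v$. I would construct the $\GPCP_n(G)$ instance with pairs $(g_i,h_i) := (\phi(a_i),\psi(a_i))$ for $i=1,\dots,n$ and constants $a_1 = u,\ b_1 = 1,\ a_2 = 1,\ b_2 = v$. The crucial observation is that for any reduced word $t(x_1,\dots,x_n) \in F_n$, the evaluation $t(g_1,\dots,g_n)$ equals $\phi(t(a_1,\dots,a_n))$, and similarly for $\psi$, because $\phi$ and $\psi$ are homomorphisms. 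Hence a word $t$ solves the $\GPCP$ instance iff the element $w := t(a_1,\dots,a_n)$ satisfies $uw^\phi = w^\psi v$. Since every element of $G$ arises as $t(a_1,\dots,a_n)$ for some $t$ (as $A$ generates $G$), the two problems have the same yes/no answer on this instance. The reduction produces the $\GPCP$ input by writing out $\phi(a_i)$ and $\psi(a_i)$ for each $i$, which is linear in the input size.

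For part 2), assume $G$ is relatively free with basis $A$, i.e., $G = F_{G,n}$. Given a $\GPCP_n(G)$ instance, we may first invoke the preceding lemma to assume the constants have the form $b_1=a_2=b_2=1$ and $a_1 = a$; then further use the identity $aw(g_1,\dots,g_n) = w(h_1,\dots,h_n)$ versus the twisted-conjugacy shape $u\phi(w) = \psi(w)v$ by (if needed) moving a constant to the right via $v=1$, since I can freely choose $v$. Specifically I set $u := a$ and $v := 1$. By relative freeness, the assignments $a_i \mapsto g_i$ and $a_i \mapsto h_i$ extend uniquely to endomorphisms $\phi,\psi \in \mathop{\mathrm{End}}(G)$ — this is the defining universal property of $F_{G,n}$ as the free object on $n$ generators in $\Var(G)$, and it is exactly what fails for general $G$, which is why the equivalence requires the relatively-free hypothesis. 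Now again $t(g_1,\dots,g_n) = \phi(t(a_1,\dots,a_n))$ and $t(h_1,\dots,h_n) = \psi(t(a_1,\dots,a_n))$, so a word $t$ solves the $\GPCP$ instance iff $w = t(a_1,\dots,a_n)$ satisfies $uw^\phi = w^\psi v$. Combined with part 1), this gives linear time equivalence.

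The hard part is largely conceptual rather than technical: in part 2) one must recognize that, without relative freeness, there is no way to guarantee that a given tuple $(g_1,\dots,g_n) \in G^n$ is realized by an endomorphism of $G$ (the map $a_i \mapsto g_i$ may fail to respect the defining relations), so the backward reduction genuinely needs the universal property. Everything else is bookkeeping: checking that evaluation commutes with the homomorphisms, and that the constants $a_1,b_1,a_2,b_2$ on the $\GPCP$ side match the constants $u,v$ on the twisted-conjugacy side after using the normalization from the earlier lemma.
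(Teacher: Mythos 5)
Your argument is correct and is essentially the one the paper intends: the paper gives no separate proof, stating only that the proposition ``follows from the discussion above,'' and that discussion is exactly the dictionary you use between $n$-tuples $(\phi(a_1),\dots,\phi(a_n))$ and endomorphisms, together with the normalization of constants from the first lemma. Both directions, including your observation that relative freeness is needed precisely so that $a_i\mapsto g_i$ extends to an endomorphism, match the intended reasoning.
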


\section{The hereditary word problem and  $\GPCP$}
\label{se:GPCP}
It is easy to see that decidability of $\PCP_n$ or $\GPCP_n$ in a group $G$ has some implications for the word problem in $G$. 
Indeed, an element $g$ is equal to 1 in $G$ if and only if $\GPCP_1$ is decidable in $G$ for the instance consisting of a single pair  $(1,1)$ and the constant $g$.  Similarly, if $G$ is torsion-free then $g  = 1$ in $G$ if and only if $\PCP$ is decidable in $G$ for the instance pair $(g,1)$.   In this section we show that the whole lot of word problems in the quotients of $G$ is reducible to $\GPCP$ in $G$.

Let $G$ be a group generated by a finite set $A$. For a subset $R\subseteq G$ by $\langle R\rangle_G$ we denote the normal closure of $R$ in $G$.

\medskip
\noindent
{\bf   The hereditary word problem ($\HWP(G)$) in $G$:} Given a finite set $R\cup\{w\}$ of words in the alphabet $A\cup A^{-1}$, decide  whether or not  $w$ is  trivial in  the quotient  $G/\langle R\rangle_G$. 

\medskip
Note that  this problem can also be stated as  the uniform membership problem to normal finitely generated subgroups of $G$. Observe also that $\HWP(G)$ requires a {\em uniform}  algorithm for the word problems in the quotients $G/\langle R\rangle_G$.  

It seems that groups with decidable $\HWP$ are rare. Notice that the hereditary word problem is decidable in finitely generated abelian or nilpotent groups.

\begin{proposition}\label{prop:WPtoPCP}
Let $G$ be a finitely generated group. Then the  hereditary word problem in $G$ $\P$-time reduces to $\GPCP(G)$.
\end{proposition}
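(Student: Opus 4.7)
The plan is to encode each instance $(R = \{r_1, \ldots, r_k\}, w)$ of $\HWP(G)$ as a $\GPCP(G)$ instance whose solutions correspond exactly to explicit expressions of $w$ as a product of conjugates of elements of $R^{\pm 1}$, i.e. to certificates that $w \in \langle R\rangle_G$.

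Writing $A = \{a_1, \ldots, a_m\}$ for the fixed generating set of $G$, I would build the following $\GPCP(G)$ instance with $m + k$ pairs, associated to variables $x_1, \ldots, x_m, y_1, \ldots, y_k$: take the pairs $(a_i, a_i)$ for $i = 1, \ldots, m$, the pairs $(1, r_j)$ for $j = 1, \ldots, k$, and the constant $a = w$ (with $b_1 = b_2 = a_2 = 1$, as allowed by the preceding lemma). For $t \in F(x_1, \ldots, x_m, y_1, \ldots, y_k)$ let $t^{(u)}$ be its image under $x_i \mapsto a_i,\ y_j \mapsto 1$, and $t^{(v)}$ its image under $x_i \mapsto a_i,\ y_j \mapsto r_j$, both evaluated in $G$. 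A solution is then any $t$ with $w \cdot t^{(u)} = t^{(v)}$ in $G$, equivalently $w = t^{(v)} (t^{(u)})^{-1}$.

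For the forward direction, if $w = \prod_{\ell=1}^N g_\ell\, r_{j_\ell}^{\epsilon_\ell}\, g_\ell^{-1}$ in $G$, I would rewrite each $g_\ell$ as a word $\widetilde g_\ell(a_1, \ldots, a_m)$ in the generators and set
\[
t \;=\; \prod_{\ell=1}^N \widetilde g_\ell(x_1, \ldots, x_m)\, y_{j_\ell}^{\epsilon_\ell}\, \widetilde g_\ell(x_1, \ldots, x_m)^{-1}.
\]
Then $t^{(u)} = \prod_\ell g_\ell \cdot 1 \cdot g_\ell^{-1} = 1$ and $t^{(v)} = w$, so $t$ solves the instance. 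The converse is the substantive step: I claim that for \emph{every} $t$ one has $t^{(v)}(t^{(u)})^{-1} \in \langle R\rangle_G$. I would prove this by induction on the reduced length of $t$, writing $t = t' s$ for a trailing letter $s$. If $s = x_i^{\pm 1}$, then $a_i^{\pm 1}$ appears at the end of both $t^{(v)}$ and $t^{(u)}$ and cancels, giving $t^{(v)}(t^{(u)})^{-1} = t'^{(v)}(t'^{(u)})^{-1}$. If $s = y_j$, then
\[
t^{(v)}(t^{(u)})^{-1} \;=\; t'^{(v)} r_j (t'^{(u)})^{-1} \;=\; \bigl( t'^{(v)} r_j (t'^{(v)})^{-1} \bigr) \cdot t'^{(v)}(t'^{(u)})^{-1},
\]
which is a conjugate of $r_j$ times (by the inductive hypothesis) an element of $\langle R\rangle_G$, and the case $s = y_j^{-1}$ is symmetric. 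Hence any solution $t$ witnesses $w = t^{(v)}(t^{(u)})^{-1} \in \langle R\rangle_G$.

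The construction is clearly computable in linear time in the size of the input $(R, w)$, so this yields the required $\P$-time reduction. The only mildly delicate point is the inductive identification of $t^{(v)}(t^{(u)})^{-1}$ as a normal-closure element; its content is simply that the pairs $(a_i, a_i)$ contribute trivially to this ``difference,'' while each occurrence of $y_j^{\pm 1}$ injects exactly one conjugate of $r_j^{\pm 1}$ into it, so that running through the letters of $t$ precisely traces out a normal-closure product for $w$.
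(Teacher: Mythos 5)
Your reduction is correct and is essentially the paper's own construction: you pair each generator with itself (so generator letters cancel between the two sides) and each relator with the identity (so each occurrence of a relator variable injects exactly one conjugate of a relator into the difference $t^{(v)}(t^{(u)})^{-1}$), take $w$ as the constant, and observe that solutions correspond exactly to expressions of $w$ as a product of conjugates of elements of $R^{\pm1}$. The only differences are cosmetic --- the paper places the relators in the first coordinate and adds explicit pairs $(r^{-1},1)$ so as to argue with positive sequences of pairs, whereas your induction on the reduced length of $t$ handles inverse letters directly and is, if anything, a slightly cleaner verification that arbitrary elements of the free group (not just positive words) yield only normal-closure witnesses.
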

\begin{proof}
Let $A$ be a finite generating set of $G$.  Suppose  $R$ is a finite set of elements of $G$, represented by words in $A\cup A^{-1}$.
Denote $H=G/\langle R\rangle_G$.
 Put 
 $$D_R=\{(a,a^{-1})\mid a\in A\}\cup\{(a,a^{-1})\mid a\in A\}\cup \{(r,1)\mid r \in R\}\cup \{(r^{-1},1)\mid r \in R\}.
$$

\noindent{\bf Claim 1.} Let $w$ be a word $w\in (A\cup A^{-1})^\ast$. Then $w=_H1$ if and only if there is a finite sequence of pairs
$(u_1,v_1),\ldots,(u_k,v_k)\in D_R$ such that
\begin{equation} \label{eq:u-v}
v_n(\cdots (v_2(v_1 wu_1)u_2)\cdots )u_n=_G 1.
\end{equation}
Indeed, if (\ref{eq:u-v}) holds then 
$$
w=_G v_1^{-1} \ldots v_{n-1}^{-1}(v_n^{-1}u_n^{-1})u_{n-1} ^{-1}\ldots u_1^{-1} =_H 1
$$
since for every pair $(u,v) \in D_R$ one has $uv = 1$ in $H$. 

 To show the converse, suppose $w =_H 1$, i.e., $w \in \langle R\rangle_G$. In this case 
\begin{equation}\label{eq:wp}
w=_Gw_1r_1w_2\ldots w_{m}r_mw_{m+1}
\end{equation} with $r_i\in R, w_i\in A^\ast$ and $w_1w_2\ldots w_{m+1}=_G1$.
Rewriting (\ref{eq:wp}) one gets 
\begin{equation}\label{eq:wp2}
r_1^{-1} \cdot  w_1^{-1} \cdot w\cdot w_1 \cdot 1 =_G w_2r_2w_3\ldots w_{m}r_mw_{m+1}w_1.
\end{equation}
 Notice that the product on the left is in the form required in (\ref{eq:u-v}), and the product on the right is in the form required in  (\ref{eq:wp}). Now the result follows by induction  on  $m$. This proves the  claim.

\medskip
\noindent{\bf Claim 2.} Let $R\subseteq (A\cup A^{-1})^*$ be a finite set and $w \in (A\cup A^{-1})^*$. Then $\GPCP(G)$ has a solution for the instance  
$\hat{D}_R = \{(u,v^{-1})\mid(u,v)\in D_R\}$ with the constant $w$ if and only if $w = 1$ in $H$.

\medskip 
Indeed,  a sequence
\begin{equation}\label{eq:pcp_witness}
(u_1,v_1^{-1}),\ldots ,(u_M,v_M^{-1}) \in \hat{D}_R
\end{equation} 
  gives a solution to $\GPCP(G)$ for  the instance $\hat{D}_R$ with the constant $w$  if and only if
\[
wu_1u_2\cdots u_M=_Gv^{-1}_1v^{-1}_2\cdots v^{-1}_M\iff v_M(\cdots (v_2(v_1 wu_1)u_2)\cdots )u_M=_G 1,
\]
which, by the claim above, is equivalent to $w=_H 1$.

This proves Claim 2 together with the proposition.
\end{proof}

\begin{corollary}\label{co:PCP_free}
Let $F$ be a free non-abelian group of finite rank.  Then $\GPCP(F)$ is undecidable.
\end{corollary}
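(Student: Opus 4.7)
The plan is to invoke Proposition \ref{prop:WPtoPCP}, which reduces $\HWP(F)$ to $\GPCP(F)$ in polynomial time. Thus it suffices to exhibit, for the given free non-abelian group $F$ of finite rank, a single finite set $R \subseteq F$ for which the word problem in the quotient $F/\langle R\rangle_F$ is algorithmically unsolvable.

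First I would appeal to the classical Novikov--Boone theorem, together with the standard fact (obtainable from Higman's embedding theorem) that there is a two-generator finitely presented group with undecidable word problem. Concretely, this gives a finite set $R_0 \subseteq F_2 = F(a,b)$ such that the word problem in $G_0 = F_2 / \langle R_0\rangle_{F_2}$ is undecidable.

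Next I would transport this example inside $F$. Let $F$ have basis $x_1,\ldots,x_k$ with $k \geq 2$. Identifying $a,b$ with $x_1,x_2$, consider the finite set
\[
R \;=\; R_0 \,\cup\, \{x_3,\ldots,x_k\} \;\subseteq\; F.
\]
The quotient $F/\langle R\rangle_F$ kills the extra generators and is naturally isomorphic to $G_0$, so for any word $w$ in $x_1,x_2$ one has $w =_{G_0} 1$ iff $w =_{F/\langle R\rangle_F} 1$. Hence the word problem in $G_0$ reduces to $\HWP(F)$ by a trivial many-one reduction (just pair the input word $w$ with the fixed instance $R$), and therefore $\HWP(F)$ is undecidable. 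Proposition \ref{prop:WPtoPCP} then yields undecidability of $\GPCP(F)$.

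The only real obstacle is to make sure the base example $G_0$ fits inside $F$ using no more than $k$ generators; this is precisely why a \emph{two-generator} version of the Novikov--Boone theorem is what one wants, since it handles the delicate minimal case $k=2$ and makes every larger rank automatic through padding by trivial generators. Everything else is bookkeeping: the reduction from $\HWP(F)$ to $\GPCP(F)$ is already supplied by Proposition \ref{prop:WPtoPCP}, and the reduction from the word problem of $G_0$ to $\HWP(F)$ is obtained by holding the relator list $R$ fixed and varying only $w$.
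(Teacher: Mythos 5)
Your proof is correct and follows essentially the same route as the paper: reduce the word problem of a finitely presented group with undecidable word problem to $\HWP(F)$ by fixing the relator set, then apply Proposition~\ref{prop:WPtoPCP}. The only cosmetic difference is that the paper directly cites the existence of $n$-generator finitely presented groups with undecidable word problem for every $n\geq 2$, whereas you derive the same fact by padding a two-generator example with trivial generators.
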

\begin{proof}
It is known~\cite{Miller} that  for any natural number $n \geq 2$ there are finitely presented groups with $n$ generators and undecidable word problem.  Therefore,  $\HWP(F)$ is undecidable.  By Proposition~\ref{prop:WPtoPCP} $\GPCP(F)$ is also undecidable. \end{proof}

For a finite group presentation $P=\langle a_1, \ldots,a_k \mid r_1, \ldots, r_\ell\rangle$ denote by $N(P) = k+\ell$ the total sum of the number of generators and relators in $P$. Let $N$ be the least number $N(P)$ among all finite  presentations $P$ with undecidable word problem. In  \cite{Borisov:1969}  Borisov constructed a finitely presented group with $4$ generators and  $12$ 
relations which has undecidable word problem, so $N \leq 16$.

\begin{corollary}\label{co:twisted-free}
Let $F_n$ be a free group of rank $n \geq 32$. Then the endomorphism double twisted conjugacy problem in $F_n$ (as well as $\GPCP_n(F_n)$) is undecidable.
\end{corollary}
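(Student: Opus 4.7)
The plan is to quantify the reduction of Proposition~\ref{prop:WPtoPCP} applied to an explicit finitely presented group with undecidable word problem, then invoke Proposition~\ref{pr:reduction-GPCP-2}(2) to convert undecidability of $\GPCP_n(F_n)$ into undecidability of the double endomorphism twisted conjugacy problem.

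First I would fix Borisov's presentation $P = \langle a_1, \ldots, a_4 \mid r_1, \ldots, r_{12}\rangle$ from \cite{Borisov:1969}, which has undecidable word problem with $N(P) = 16$. Working in $F = F_4$ viewed as the free group on the generators of $P$, the instances of $\HWP(F)$ with this fixed $R = \{r_1, \ldots, r_{12}\}$ and varying inputs $w$ form an undecidable subproblem. I would then trace carefully through the construction of $\hat{D}_R$ in the proof of Proposition~\ref{prop:WPtoPCP}: its cardinality is $2|A| + 2|R| = 2 \cdot 4 + 2 \cdot 12 = 32$, and the output $\GPCP$ instance lives in $F_4$ with constant $w$. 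Consequently $\GPCP_{32}(F_4)$ is undecidable.

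Next I would lift this to $\GPCP_n(F_n)$ for every $n \geq 32$. Embedding $F_4 \hookrightarrow F_n$ via the first four generators turns any instance of $\GPCP_{32}(F_4)$ into an instance of $\GPCP_{32}(F_n)$; since all defining equations live inside the $F_4$-subgroup and equality in $F_n$ restricts to equality in $F_4$, a solution exists in $F_n$ iff it exists in $F_4$, so $\GPCP_{32}(F_n)$ is undecidable for every $n \geq 32$. To bring the number of pairs up from $32$ to $n$, I would pad the instance with $n - 32$ trivial pairs $(1,1)$: any solution in $n$ variables remains a solution after setting the last $n - 32$ variables to $1$, and conversely any solution in $32$ variables is a solution of the padded instance that ignores the extra variables. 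This gives undecidability of $\GPCP_n(F_n)$ for all $n \geq 32$.

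Finally, since $F_n$ is free, it is relatively free with basis of size $n$, so Proposition~\ref{pr:reduction-GPCP-2}(2) gives a linear time equivalence between $\GPCP_n(F_n)$ and the double endomorphism twisted conjugacy problem in $F_n$, finishing the argument. The main obstacle I anticipate is the bookkeeping in the second step: one must match the count $|\hat{D}_R| = 2N \leq 32$ produced by the $\HWP$-to-$\GPCP$ reduction against the rank parameter $n$ in the relatively-free equivalence of Proposition~\ref{pr:reduction-GPCP-2}(2), and one must verify that both the subgroup embedding $F_4 \hookrightarrow F_n$ and the padding by $(1,1)$ pairs are solution-preserving — the content here is minor but needs to be made explicit to justify the exact bound $n \geq 32$.
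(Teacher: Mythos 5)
Your proposal is correct and follows essentially the same route as the paper: reduce the word problem of Borisov's $4$-generator, $12$-relator group through Claim~2 of Proposition~\ref{prop:WPtoPCP} (yielding $\GPCP$ instances with $2\cdot 4+2\cdot 12=32$ pairs inside $F_n$), and then apply Proposition~\ref{pr:reduction-GPCP-2}(2) to transfer undecidability to the endomorphism double twisted conjugacy problem. Your extra bookkeeping (the embedding $F_4\hookrightarrow F_n$ and padding with $(1,1)$ pairs) is harmless and only makes explicit what the paper leaves implicit in its one-line argument.
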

\begin{proof}
Let $P^\prime = \langle a_1, \ldots,a_4 \mid r_1, \ldots, r_{12}\rangle $ be the Borisov's presentation and $F_n = \langle a_1, \ldots,a_n \rangle$ a free group of rank $n\geq 32$. Claim 2 in the proof of Proposition \ref{prop:WPtoPCP} shows that the word problem in the group $H$ defined by the presentation $P^\prime$ is polynomial time reducible to $\GPCP_n(F_n)$, hence the latter one is undecidable.  Now the part 2 in Proposition \ref{pr:reduction-GPCP-2} shows that the endomorphism double twisted conjugacy problem in $F_n$ is also undecidable, as claimed. 
\end{proof}

Note that the twisted conjugacy problem is decidable in free groups~\cite{BMMV}. Together with Corollary~\ref{co:twisted-free}, this gives the following result.
\begin{corollary}\label{co:venturas_question}
Free groups of rank at least $32$ have decidable twisted conjugacy problem but undecidable endomorphism double twisted conjugacy problem.
\end{corollary}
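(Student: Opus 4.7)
The plan is to assemble this as an immediate two-line corollary by combining a prior result cited in the paper with the preceding Corollary \ref{co:twisted-free}. The point of the statement is really to juxtapose two facts that together answer a question (presumably Ventura's) about the sharpness of the decidability of twisted conjugacy in free groups when one weakens the hypotheses from automorphisms to endomorphisms and passes to the double version.

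First I would handle the positive half: quote the result of Bogopolski--Martino--Maslakova--Ventura \cite{BMMV} which establishes decidability of the (single) twisted conjugacy problem in every finitely generated free group for any fixed automorphism. Since this applies in particular to $F_n$ with $n \geq 32$, the first clause of the statement is obtained directly with no further work. It is worth noting explicitly here that the BMMV decidability is for automorphisms and for the single (not double) twisted conjugacy problem, because that is precisely what makes the contrast with the second clause meaningful.

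Second I would handle the negative half by invoking Corollary \ref{co:twisted-free}, which already states that the endomorphism double twisted conjugacy problem in $F_n$ is undecidable for $n \geq 32$. Putting the two parts together yields the claimed dichotomy.

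There is essentially no obstacle: the content of the corollary lies entirely in the two results being combined, and the novelty is conceptual rather than technical. The only thing to watch is a small notational clarification, so that the reader sees that ``twisted conjugacy'' in the first clause and ``endomorphism double twisted conjugacy'' in the second are genuinely different problems in two respects at once (automorphism vs.\ endomorphism, and single vs.\ double), and that the paper's contribution lies in showing the latter cannot be decided, in sharp contrast to the former.
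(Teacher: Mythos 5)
Your proposal matches the paper's argument exactly: the paper likewise combines the decidability of the twisted conjugacy problem in free groups from \cite{BMMV} with the undecidability established in Corollary~\ref{co:twisted-free}. No gaps; the corollary is indeed an immediate juxtaposition of these two facts.
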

\begin{remark}
Note that for a given group, decidability of the endomorphism double twisted conjugacy problem implies decidability of the twisted conjugacy problem, which in turn implies decidability of the conjugacy problem. It was shown in~\cite{BMV} that the converse to the latter implication is in general false. The above result~\ref{co:venturas_question} answers E.~Ventura's question whether the converse to the former implication is true.
\end{remark}

Similar results hold for free solvable groups.  Let $N_{sol}$ be the least number $N(P)$ among all finite  presentations $P$ which define a solvable group with undecidable word problem. In  \cite{Kharlampovich:1981}  Kharlampovich  constructed a finitely presented solvable group with undecidable word problem, so such number $N_{sol}$ exists. 

\begin{corollary}\label{co:PCP_free_sol}
Let $S_{m,n}$ be a free solvable non-abelian group of class $m\ge 3$ and rank  $n \geq N_{sol}$. Then the endomorphism double  twisted conjugacy problem in $S_{m,n}$ (as well as $\GPCP_n(S_{m,n})$) is undecidable.
\end{corollary}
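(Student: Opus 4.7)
The plan is to mirror the proof of Corollary \ref{co:twisted-free} verbatim, replacing Borisov's example by Kharlampovich's. First I would invoke \cite{Kharlampovich:1981} to fix a finite presentation $P^\prime = \langle a_1, \ldots, a_k \mid r_1, \ldots, r_\ell\rangle$ with $k + \ell = N_{sol}$ defining a finitely presented solvable group $K$ of derived length at most $3$ with undecidable word problem. Since $n \geq N_{sol} \geq k$, I would regard $a_1, \ldots, a_k$ as elements of the free solvable group $S_{m,n} = \langle a_1, \ldots, a_n\rangle$ of class $m \geq 3$, and consider the finite subset $R = \{r_1, \ldots, r_\ell, a_{k+1}, \ldots, a_n\} \subseteq S_{m,n}$. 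Because $m$ is at least the derived length of $K$, the quotient $S_{m,n}/\langle R\rangle_{S_{m,n}}$ is isomorphic to $K$, so its word problem is undecidable.

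Next I would apply Proposition \ref{prop:WPtoPCP} with $G = S_{m,n}$. Its proof uses only the group operations and the definition of normal closure, so it carries over without modification. Taking the set $R$ above and an arbitrary word $w$ in $A \cup A^{-1}$, the construction in Claim~2 of that proof produces in polynomial time an instance $\hat D_R$ of $\GPCP(S_{m,n})$ with constant $w$ that has a solution if and only if $w =_K 1$. Since the word problem in $K$ is undecidable, this reduction shows that $\GPCP_n(S_{m,n})$ is undecidable.

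Finally, $S_{m,n}$ is relatively free with basis $A = \{a_1, \ldots, a_n\}$ in the variety of solvable groups of class $m$. Part 2 of Proposition \ref{pr:reduction-GPCP-2} then guarantees that the double endomorphism twisted conjugacy problem in $S_{m,n}$ is linear time equivalent to $\GPCP_n(S_{m,n})$. Undecidability of $\GPCP_n(S_{m,n})$ therefore transfers to the double endomorphism twisted conjugacy problem in $S_{m,n}$, completing the argument.

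The main obstacle is purely bookkeeping: one must ensure that the derived length of Kharlampovich's group is compatible with the hypothesis $m \geq 3$ (this is exactly what the choice of the bound $m \geq 3$ is designed for) and that the extra generators $a_{k+1}, \ldots, a_n$ of $S_{m,n}$ beyond those used by $P^\prime$ are correctly handled by including them as additional trivial relators in $R$. Once these small points are settled, the argument is a direct transcription of Corollary \ref{co:twisted-free} into the solvable setting, and no new algebraic input beyond Kharlampovich's theorem is required.
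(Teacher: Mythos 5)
Your argument is correct and is essentially the paper's proof, which consists of the single line ``Similar to the argument in Corollary \ref{co:twisted-free}''; you have filled in the intended details, substituting Kharlampovich's presentation for Borisov's and invoking Claim~2 of Proposition~\ref{prop:WPtoPCP} and part~2) of Proposition~\ref{pr:reduction-GPCP-2} exactly as in the free-group case. One bookkeeping caution: adding $a_{k+1},\ldots,a_n$ to $R$ inflates the instance $\hat D_R$ to about $4n+2\ell-2k$ pairs, so as written you obtain undecidability of $\GPCP(S_{m,n})$ rather than of $\GPCP_n(S_{m,n})$ with the stated subscript; the route that matches Corollary~\ref{co:twisted-free} is to work inside the relatively free subgroup $\langle a_1,\ldots,a_k\rangle\cong S_{m,k}$ (equality of its elements in $S_{m,n}$ is equality in $S_{m,k}$), which keeps the count at $2N_{sol}=2(k+\ell)$ pairs, consistent with the factor of $2$ visible in the bound $n\ge 32=2N$ of the free-group corollary.
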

\begin{proof}
Similar to the argument in Corollary \ref{co:twisted-free}.
\end{proof}

Observe that it immediately follows from definitions that decidability of $\PCP$ or $\GPCP$ in a finitely generated  group is inherited by all finitely generated subgroup of $G$. Therefore, the results above give a host of groups with undecidable $\GPCP$ (as well as  $\GPCP_n$).

\begin{corollary}
If a group $G$ contains a free non-abelian subgroup $F_2$ then $\GPCP(G)$  is undecidable.
\end{corollary}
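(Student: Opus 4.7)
The plan is to leverage two ingredients stated earlier in the paper: Corollary~\ref{co:PCP_free}, which gives that $\GPCP(F)$ is undecidable for any free non-abelian group $F$ of finite rank and in particular for $F_2$, together with the observation (stated just above this corollary) that decidability of $\GPCP$ is inherited by finitely generated subgroups. The whole proof amounts to making the inheritance step explicit and applying it to $H = F_2$.

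To spell the inheritance out, fix a pair of elements $\alpha,\beta\in G$ that freely generate the given copy of $F_2$, each written as a word in the generators of $G$. Given any instance of $\GPCP(F_2)$ consisting of pairs $(g_1,h_1),\ldots,(g_n,h_n)$ and a constant $a$ (all given as words in the free generators $x,y$ of $F_2$), substitute $\alpha$ for $x$ and $\beta$ for $y$ throughout. This yields an instance of $\GPCP(G)$ of polynomial size. Because the chosen embedding $F_2\hookrightarrow G$ preserves equality, the relation
\[
a\,w(g_1,\dots,g_n) \;=\; w(h_1,\dots,h_n)
\]
holds in $F_2$ if and only if it holds in $G$. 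Consequently a word $w\in F(x_1,\dots,x_n)$ solves the $F_2$-instance exactly when it solves the rewritten $G$-instance, giving a (polynomial time) many-one reduction $\GPCP(F_2)\le_m \GPCP(G)$.

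Combining the reduction with Corollary~\ref{co:PCP_free}, any decision procedure for $\GPCP(G)$ would produce one for $\GPCP(F_2)$, contradicting the undecidability of the latter. Hence $\GPCP(G)$ is undecidable, as claimed. I do not foresee any serious obstacle: the only mild point is that the reduction uses a fixed embedding $F_2\hookrightarrow G$, which the hypothesis provides.
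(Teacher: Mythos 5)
Your proposal is correct and follows the same route as the paper: the paper derives this corollary from exactly the observation that decidability of $\GPCP$ is inherited by (finitely generated) subgroups, combined with Corollary~\ref{co:PCP_free}. You have merely made the inheritance reduction explicit, which is fine and matches the intended argument.
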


Therefore $\GPCP$ is undecidable, for example, in non-elementary hyperbolic groups, non-abelian right angled Artin groups, groups with non-trivial splittings into free products with amalgamation or HNN extensions, braid groups $B_n$, non-virtually solvable linear groups, etc.

Another corollary of the results above concerns with complexity of the bounded $\GPCP$ in groups.

\begin{corollary}\label{co:NP_PCP_free}
Let $F$ be a non-abelian free group of finite rank. Then the bounded $\GPCP(F)$ is $\NP$-complete.
\end{corollary}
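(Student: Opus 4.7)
The plan is to establish $\NP$-completeness in two halves: an elementary certificate argument for $\NP$-membership, and a length-sensitive re-run of Proposition~\ref{prop:WPtoPCP} pointed at a finite presentation whose bounded word problem is already $\NP$-hard.

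For $\BGPCP(F) \in \NP$, given an instance with length bound $M$ (given in unary, or equivalently polynomially bounded by the input), one guesses a candidate word $w \in F(x_1,\ldots,x_n)$ with $|w| \leq M$. Substituting into the equation $a_1 w(g_1,\ldots,g_n) b_1 = a_2 w(h_1,\ldots,h_n) b_2$ produces two words in $F$ of polynomial total length, which are then compared via free reduction in polynomial time. This is the general observation the authors record in the introduction about $\BPCP$ lying in $\NP$ for any group with $\P$-time word problem.

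For $\NP$-hardness, I would re-examine the proof of Proposition~\ref{prop:WPtoPCP} keeping length as a parameter. A pair sequence $(u_1,v_1),\ldots,(u_M,v_M) \in \hat{D}_R$ solving the $\GPCP$-instance of that proof with constant $w$ is in bijection with a derivation of $w$ to $1$ in $H = F/\langle R\rangle_F$ whose ``area'' (number of applied relator-conjugates, up to trivial generator cancellations from the first family of pairs in $D_R$) is precisely $M$. Hence the reduction in Proposition~\ref{prop:WPtoPCP} is length-preserving up to a constant factor, and the bounded hereditary word problem in $H$ (given $w \in F$ and $M$ in unary, is $w$ a product of at most $M$ conjugates of relators from $R^{\pm 1}$?) reduces in polynomial time to $\BGPCP(F)$ with length bound $O(M)$.

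It remains to exhibit a fixed finite $R \subseteq F$ for which this bounded hereditary word problem is $\NP$-hard; this is the crux and the main obstacle. One clean route is to invoke the Birget--Olshanskii--Rips--Sapir construction of finitely presented groups realizing $\NP$-complete Dehn functions, which supplies such an $R$ directly. A more elementary alternative is a direct encoding of $3$-SAT into a suitable fixed finite presentation $\langle A \mid R\rangle$, sending a formula $\varphi$ to a word $w_\varphi \in F(A)$ (computable in polynomial time from $\varphi$) such that $\varphi$ is satisfiable iff $w_\varphi$ admits a derivation of area bounded polynomially in $|\varphi|$. Either choice, combined with the length-sensitive reduction of the previous paragraph, yields $\NP$-hardness of $\BGPCP(F(A))$. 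Since every non-abelian free group of finite rank contains $F(A)$ as a subgroup, and any instance of $\BGPCP$ in a subgroup is literally an instance of $\BGPCP$ in the ambient group with identical solution set, the claimed $\NP$-hardness transfers to every non-abelian free group of finite rank, completing the proof.
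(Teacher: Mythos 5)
Your plan follows essentially the same route as the paper's proof: membership in $\NP$ by guess-and-check against the $\P$-time word problem in $F$, and hardness by tracking solution length through the reduction of Proposition~\ref{prop:WPtoPCP} and feeding it the Sapir--Birget--Rips group $H=\langle B\mid R\rangle$, which is exactly the source the paper invokes. Two points in your formulation need tightening. First, the correspondence between solutions of length $M$ and derivations of area $m$ is not ``length-preserving up to a constant factor'': $M$ counts not only the relator pairs but also the conjugating letters $(a,a^{-1})$, so $M$ is bounded by a polynomial in $|w|$ and $m$ only after one invokes a conjugator-length bound in free groups (the paper cites Lemma~1 of Olshanskii--Sapir for exactly this). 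Second, and more substantively, your intermediate ``bounded hereditary word problem'' reduction does not quite close as stated: mapping $(w,M)$ to a $\BGPCP$ instance with bound $M'=\mathrm{poly}(|w|,M)$ gives that area at most $M$ implies a solution of length at most $M'$, but a solution of length at most $M'$ only certifies area at most $M'$, not at most $M$, so the two bounded problems are not shown equivalent. The paper sidesteps this by reducing from the \emph{unbounded} word problem of $H$ and using that $H$ has a polynomial Dehn function $\delta_H$: then $w=_H 1$, ``area at most $\delta_H(|w|)$'', and ``$\GPCP$ solution of length at most $q(|w|)$'' are all equivalent, so the map $w\mapsto (w,D_R,M=q(|w|))$ is a correct $\P$-time reduction. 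Your ``clean route'' via Sapir--Birget--Rips does supply both required properties ($\NP$-complete word problem \emph{and} polynomial Dehn function), so once you reduce from the unbounded word problem rather than the bounded one, your argument coincides with the paper's; the concluding subgroup-transfer step is fine and is also how the paper handles the change of generating set.
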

\begin{proof}
Let $F = F(A)$ be a free non-abelian group with a finite basis $A$. It is showed in~\cite[Corollary 1.1]{Sapir-Birget-Rips:2002}
that there exists a finitely presented group $H=\langle B\mid R\rangle$ with $\NP$-complete word problem and polynomial Dehn function $\delta_H(n)$. Passing to a subgroup of $F(A)$, we may assume that $A=B$.
One can see that in the case of a free group $G=F(A)$, $M$ in~(\ref{eq:pcp_witness}) is bounded by a
polynomial (in fact, linear) function of $|w|$ and  the number $m$ of relators in~(\ref{eq:wp}) (see~\cite[Lemma 1]{Olshanskii_Sapir:2001} for details). Note that there exists 
$m$ as above bounded by $\delta_H(|w|)$, so $M$ is bounded by some polynomial $q(|w|)$. Therefore, the map
\[w\to (w, D_R, M=q(|w|))\]
is a $\P$-time reduction of the word problem in $H$ to the bounded $\GPCP(F(A))$. It follows that the latter is $\NP$-hard and therefore
$\NP$-complete (since the word problem in $F(A)$ is $\P$-time decidable).
\end{proof}

\begin{corollary}\label{co:NP_PCP_free_sub}
If a group $G$ contains a free non-abelian subgroup $F_2$ then  the bounded $\GPCP(G)$ is $\NP$-hard.
\end{corollary}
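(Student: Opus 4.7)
The plan is to reduce the bounded $\GPCP(F_2)$, known to be $\NP$-hard by Corollary \ref{co:NP_PCP_free}, to the bounded $\GPCP(G)$ in polynomial time. Fix an embedding $\iota: F_2 \hookrightarrow G$ and let $y_1, y_2$ be fixed generators of $F_2$; each $\iota(y_j)$ is a fixed word $\bar{y}_j$ in the generators of $G$. Given an instance $(g_1, h_1), \ldots, (g_n, h_n)$ with constant $a$ and bound $M$ of bounded $\GPCP(F_2)$, where each $g_i, h_i, a$ is a word in $\{y_1, y_2\}^{\pm 1}$, we build an instance of bounded $\GPCP(G)$ by replacing each occurrence of $y_j$ by $\bar{y}_j$ in $g_i, h_i, a$, keeping the same bound $M$.

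The correctness of the reduction rests on the observation that $F_2$ embeds into $G$, so for any two words $p, q$ in the generators of $F_2$ we have $p =_{F_2} q$ if and only if $\iota(p) =_G \iota(q)$. Consequently, a word $w(x_1,\ldots,x_n) \in F(x_1, \ldots, x_n)$ of length at most $M$ satisfies $a\,w(g_1,\ldots,g_n) =_{F_2} w(h_1,\ldots,h_n)$ if and only if the corresponding equality holds in $G$ after substitution. Thus the original $F_2$-instance has a solution of length $\leq M$ if and only if the constructed $G$-instance does, and any certificate transfers verbatim between the two settings.

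The reduction is clearly polynomial time: each input element of $F_2$ is rewritten by substituting the fixed words $\bar{y}_1, \bar{y}_2$ for the symbols $y_1, y_2$, blowing up its length by at most the constant factor $\max(|\bar{y}_1|, |\bar{y}_2|)$. The bound $M$ is unchanged, and the number of pairs stays the same. Combining this polynomial reduction with the $\NP$-hardness of bounded $\GPCP(F_2)$ from Corollary \ref{co:NP_PCP_free} yields the claim.

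I do not anticipate a genuine obstacle here; the argument is a routine subgroup reduction. The one point to be explicit about is that $\NP$-hardness does not require the word problem in $G$ to be decidable (that assumption was only needed to place bounded $\GPCP(F)$ in $\NP$ in the free case), so no further hypothesis on $G$ is required.
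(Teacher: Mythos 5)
Your reduction is correct and is exactly the argument the paper intends: the corollary is stated without proof, relying on the earlier observation that (bounded) $\GPCP$ instances over a finitely generated subgroup are instances over the ambient group once the subgroup's generators are rewritten as fixed words in the generators of $G$. Your explicit check that the bound $M$ is unchanged (since it constrains the word $w$ in the variables $x_i$, not the group elements) and that $\NP$-hardness needs no decidability hypothesis on $G$ is a correct and welcome elaboration of what the paper leaves implicit.
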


\section{$\PCP$ in nilpotent groups}
\label{se:nilpotent}

In this section we study complexity of Post correspondence problems in nilpotent groups.

\begin{proposition}\label{prop:abelian}
There is a polynomial time algorithm that given finite presentations of groups $A$, $B$ in the class of abelian groups and a homomorphism  $\phi:A \to B$ computes a finite set of  generators of the kernel of $\phi$.
\end{proposition}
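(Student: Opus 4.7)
The plan is to translate the problem into linear algebra over $\mathbb{Z}$ and invoke a polynomial-time algorithm for computing the kernel of an integer matrix. A finite abelian presentation $A = \langle a_1, \ldots, a_n \mid r_1, \ldots, r_p\rangle$ identifies $A$ with $\mathbb{Z}^n / \mathrm{Im}(R_A)$, where $R_A \in \mathbb{Z}^{n \times p}$ is the matrix whose $j$-th column records the exponent-sum vector of the relator $r_j$. Likewise $B \cong \mathbb{Z}^m / \mathrm{Im}(R_B)$ for some $R_B \in \mathbb{Z}^{m \times q}$, and the homomorphism $\phi$ is encoded by an integer matrix $\Phi \in \mathbb{Z}^{m \times n}$ whose $i$-th column gives $\phi(a_i)$ in terms of $b_1, \ldots, b_m$. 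All of these integer data are computable in linear time from the input.

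Under this identification, a vector $x \in \mathbb{Z}^n$ projects to an element of $\ker \phi$ in $A$ if and only if $\Phi x \in \mathrm{Im}(R_B)$. Since $\phi$ is well-defined, every column of $R_A$ already satisfies this condition, so the set $K = \{x \in \mathbb{Z}^n : \Phi x \in \mathrm{Im}(R_B)\}$ contains $\mathrm{Im}(R_A)$ and the canonical projection $\pi : \mathbb{Z}^n \to A$ sends $K$ onto $\ker \phi$. It therefore suffices to compute a finite generating set for $K$ and return its image under $\pi$. But $K$ is visibly the projection to the first $n$ coordinates of the integer kernel of the block matrix $M = [\,\Phi \mid -R_B\,] \in \mathbb{Z}^{m \times (n+q)}$, so a generating set for $K$ is obtained by taking any $\mathbb{Z}$-basis of $\ker_{\mathbb{Z}} M$ and dropping the last $q$ coordinates of each basis element.

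The only nontrivial step is computing $\ker_{\mathbb{Z}} M$ in polynomial time, and here I would invoke the Hermite or Smith normal form of $M$: if $U M V = D$ with $U, V$ unimodular and $D$ in normal form, then reading off $\ker_{\mathbb{Z}} D$ and multiplying back by $V$ is immediate. The main obstacle is to carry this out without exponential coefficient blow-up in the entries of $U$, $V$, and the intermediate matrices; this is precisely what the classical polynomial-time algorithm of Kannan and Bachem (and later refinements by Iliopoulos, Storjohann, and others) provides, guaranteeing that both the running time and the bit-length of the output are polynomial in the bit-length of $M$. Granted this ingredient, every step above runs in polynomial time in the size of the input presentations and $\phi$, which yields the desired algorithm.
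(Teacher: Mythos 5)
Your proof is correct and follows essentially the same route as the paper: reduce the problem to integer linear algebra and invoke the polynomial-time Smith/Hermite normal form algorithm of Kannan and Bachem to avoid coefficient blow-up. The only organizational difference is that the paper first computes the canonical decomposition $B \cong \mathbb{Z}^l \times K$ and then solves linear equations there, whereas you compute the integer kernel of the single block matrix $[\,\Phi \mid -R_B\,]$ and project; these amount to the same computation, and your version is if anything more explicit than the paper's sketch.
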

\begin{proof}
Results of~\cite{Kannon-Bachem} provide a polynomial time algorithm to bring an integer matrix to its canonical diagonal (Smith) normal form. Since computing the canonical presentation of a finitely presented abelian group reduces by a standard argument to finding Smith form of an integer matrix (determined by relators in a given presentation), we
may find in polynomial time the canonical presentation of $B$, i.e. a direct decomposition $B = \mathbb{Z}^l \times K$, where $K$ is a finite abelian group. Once $B$ is in its canonical form, computing kernel of $\phi$ reduces to solving a system of linear equations in $Z^l$ and $K$, which can be done in polynomial time by the same results~\cite{Kannon-Bachem}.
\end{proof}

\begin{corollary}\label{co:PCP_abelian}
There is a polynomial time algorithm that given finite presentations of groups $A$, $B$ in the class of abelian groups and homomorphisms $\phi, \psi \in \Hom(A,B)$ computes a finite set of  generators of the equalizer $E(\phi,\psi)$.
\end{corollary}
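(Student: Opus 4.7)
The plan is to reduce the computation of the equalizer $E(\phi,\psi)$ directly to a single kernel computation, which is already handled by Proposition~\ref{prop:abelian}. The key observation is that since $B$ is abelian, the map $\chi \colon A \to B$ defined by $\chi(a) = \phi(a) \psi(a)^{-1}$ (written multiplicatively, or $\phi(a) - \psi(a)$ additively) is itself a group homomorphism. This is where abelianness is essential: in a non-abelian target the difference of two homomorphisms is generally not a homomorphism.

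From the definition of the equalizer we have
\[
E(\phi,\psi) = \{a \in A \mid \phi(a) = \psi(a)\} = \{a \in A \mid \chi(a) = 1_B\} = \ker \chi,
\]
so once we have $\chi$ in hand, a finite generating set for $E(\phi,\psi)$ is the same as a finite generating set for $\ker\chi$.

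Concretely, I would first read off the values of $\phi$ and $\psi$ on the given generators $a_1,\ldots,a_k$ of $A$ (these are part of the input, specifying the homomorphisms), then form $\chi$ by setting $\chi(a_i) = \phi(a_i)\psi(a_i)^{-1}$ for each $i$. This computation is linear in the input size. Finally, I would invoke Proposition~\ref{prop:abelian} on $\chi \colon A \to B$ to produce, in polynomial time, a finite generating set for $\ker\chi = E(\phi,\psi)$.

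There is essentially no obstacle here beyond verifying that the construction of $\chi$ from presentations of $A$ and $B$ and the specifications of $\phi$ and $\psi$ is polynomial-time; this is immediate, since we only need to output $k$ elements of $B$, each a product of two already-specified words of length polynomial in the input. The entire cost of the algorithm is dominated by the call to the kernel-finding procedure from Proposition~\ref{prop:abelian}, so the overall complexity is polynomial, as required.
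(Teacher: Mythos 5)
Your proposal is correct and is essentially identical to the paper's own proof: both define the difference homomorphism $\xi(g)=\phi(g)\psi(g)^{-1}$ (using that $B$ is abelian), identify $E(\phi,\psi)=\ker\xi$, and invoke Proposition~\ref{prop:abelian}. No issues.
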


\begin{proof}
Observe that a map $\xi:A \to B$ defined by $\xi(g) = \phi(g)\psi(g)^{-1}$ is a homomorphism from $A$ to $B$ and $E(\phi,\psi) = \ker\xi$. Now the result follows from Proposition \ref{prop:abelian}.
\end{proof}

One can slightly strengthen the corollaries above.
\begin{corollary}\label{co:PCP_abelian_2} Let $c$ be a fixed positive integer.
\begin{itemize}
\item [1)]  There is a polynomial time algorithm that given a finite presentation of a group $A$ (perhaps in the class of nilpotent groups of class $c$), and a finite presentation of a group $B$ in the class of abelian groups, and a homomorphism  $\phi \in \Hom(A,B)$  computes a finite set of  generators of the kernel $\ker \phi$  modulo the commutant $[A,A]$.
\item [2)] There is a polynomial time algorithm that given a finite presentation of a group $A$ (perhaps in the class of nilpotent groups of class $c$), and a finite presentation of a group $B$ in the class of abelian groups, and a homomorphism  $\phi, \psi \in \Hom(A,B)$  computes a finite set of  generators of the equalizer $E(\phi,\psi)$ modulo the commutant $[A,A]$.
\end{itemize}
\end{corollary}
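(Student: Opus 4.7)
The plan is to reduce both parts to the purely abelian case already handled in Proposition~\ref{prop:abelian} and Corollary~\ref{co:PCP_abelian} by factoring the homomorphisms through the abelianization $A^{ab} = A/[A,A]$. Because the target $B$ is abelian, every homomorphism $\phi \in \Hom(A,B)$ kills $[A,A]$ and therefore factors uniquely as $\phi = \bar\phi \circ \pi$, where $\pi \colon A \to A^{ab}$ is the canonical projection and $\bar\phi \in \Hom(A^{ab},B)$. Under the identification $A/[A,A] = A^{ab}$ one has $\ker \phi/[A,A] = \ker \bar\phi$, and similarly $E(\phi,\psi)/[A,A] = E(\bar\phi,\bar\psi)$. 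So it suffices to produce a generating set for these subgroups of the abelian group $A^{ab}$.

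First I would compute a finite presentation of $A^{ab}$ in polynomial time from the given presentation of $A$. If $A = \langle x_1,\ldots,x_k \mid r_1,\ldots,r_\ell\rangle$ (either as an ordinary finite presentation or as a presentation in the variety of nilpotent groups of class $c$), then $A^{ab}$ is the abelian group with presentation matrix whose $j$-th row is the vector of exponent sums $\sigma(r_j) \in \mathbb{Z}^k$. This matrix is read off in a single linear pass through each relator; the commutator relators coming from the nilpotency condition have zero exponent sums and therefore contribute only zero rows, which does not affect the abelianization. In particular the size of this abelian presentation is bounded linearly in the size of the input.

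Next I would compute the induced homomorphisms $\bar\phi,\bar\psi \colon A^{ab} \to B$ by simply recording the images $\phi(x_i), \psi(x_i) \in B$ on the generators, which again is polynomial-time. With $A^{ab}$ and $\bar\phi$ in hand, Proposition~\ref{prop:abelian} produces a finite generating set of $\ker \bar\phi$ in polynomial time, yielding generators of $\ker \phi$ modulo $[A,A]$ and proving part 1. Part 2 follows by the same procedure: feed the pair $\bar\phi,\bar\psi \colon A^{ab} \to B$ into Corollary~\ref{co:PCP_abelian} to obtain generators of $E(\bar\phi,\bar\psi) = E(\phi,\psi)/[A,A]$.

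There is no real obstacle to overcome here; the only point that requires a word of justification is that computing the abelianization commutes with (and trivializes) the extra variety-theoretic relators hidden in the nilpotent presentation, which is immediate since every commutator has exponent sum zero in every generator. Everything else is a direct invocation of the abelian results.
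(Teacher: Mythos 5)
Your argument is correct and is exactly the route the paper intends: the paper's proof is the one-line ``Follows immediately from Proposition~\ref{prop:abelian} and Corollary~\ref{co:PCP_abelian},'' and your factoring of $\phi,\psi$ through the abelianization $A/[A,A]$ (with the observation that the variety-theoretic commutator relators have zero exponent sums) is precisely the reduction that makes that one-liner work.
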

\begin{proof}
Follows immediately from Proposition~\ref{prop:abelian} and Corollary~\ref{co:PCP_abelian}.
\end{proof}

By $\gamma_{c}(G)$ we denote the $c$'s term of the lower central series of $G$. Recall that the iterated commutator of elements $g_1,\ldots, g_c$ is $[g_1,g_2,\ldots,g_c]=[\ldots [[g_1,g_2],g_3],\ldots ]$. The following lemma is well known (for example, see~\cite[Lemma 17.2.1]{Kargapolov-Merzlyakov}).

\begin{lemma}\label{le:nilpotent_basis}
Let $G$ be a group generated by elements $x_1,\ldots, x_n\in G$. Then $\gamma_c(G)$ is generated as a subgroup by $\gamma_{c+1}(G)$ and iterated commutators $[x_{i_1},\ldots, x_{i_{c}}]$.
\end{lemma}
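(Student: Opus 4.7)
The plan is to proceed by induction on $c$. The base case $c=1$ is trivial: $\gamma_1(G)=G$ is generated by the $x_i$, which are themselves the iterated commutators of length one.

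Suppose the statement holds for some $c \geq 1$. The subgroup $\gamma_{c+1}(G) = [\gamma_c(G), G]$ is generated by commutators $[g, h]$ with $g \in \gamma_c(G)$ and $h \in G$. By the induction hypothesis, $g$ can be written as a product of iterated commutators $[x_{i_1}, \ldots, x_{i_c}]$ and an element of $\gamma_{c+1}(G)$, while $h$ is a word in the $x_j^{\pm 1}$. To expand $[g, h]$ I would use the standard commutator identities
\[
[uv, w] = [u, w]^v [v, w], \qquad [u, vw] = [u, w]\,[u, v]^w, \qquad [u, w^{-1}] = \bigl([u, w]^{-1}\bigr)^{w^{-1}},
\]
combined with the observation that for $u \in \gamma_{c+1}(G)$ and any $v \in G$ one has $u^v = u[u, v] \equiv u \pmod{\gamma_{c+2}(G)}$. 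Taken modulo $\gamma_{c+2}(G)$, these give the congruences
\[
[g_1 g_2, h] \equiv [g_1, h][g_2, h], \qquad [g, h_1 h_2] \equiv [g, h_1][g, h_2], \qquad [g, h^{-1}] \equiv [g, h]^{-1},
\]
valid whenever the argument in $\gamma_c(G)$ is $g$.

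Iterating these congruences, each generator $[g, h]$ of $\gamma_{c+1}(G)$ decomposes modulo $\gamma_{c+2}(G)$ into a product of commutators of the form $[C, x_j^{\pm 1}]$, where each $C$ is either an iterated commutator $[x_{i_1}, \ldots, x_{i_c}]$ of length $c$ in the generators or an element of $\gamma_{c+1}(G)$. Commutators of the second type lie in $\gamma_{c+2}(G)$ and can be discarded, while the congruence $[C, x_j^{-1}] \equiv [C, x_j]^{-1}$ reduces the sign issue. Only the desired iterated commutators $[x_{i_1}, \ldots, x_{i_c}, x_j]$ of length $c+1$ remain modulo $\gamma_{c+2}(G)$, completing the induction.

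The main point to handle with care is the bookkeeping of the conjugation error terms produced by the basic identities; each such error must lie in $\gamma_{c+2}(G)$, which follows uniformly from the standard weight estimate $[\gamma_k(G), \gamma_\ell(G)] \subseteq \gamma_{k+\ell}(G)$ applied with $k = c+1$ and $\ell = 1$. This is routine but is the only place where one must be scrupulous.
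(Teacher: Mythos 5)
Your proof is correct: the induction on $c$, the use of the identities $[uv,w]=[u,w]^v[v,w]$ and $[u,vw]=[u,w][u,v]^w$, and the observation that all conjugation error terms fall into $\gamma_{c+2}(G)$ by $[\gamma_{c+1}(G),G]\subseteq\gamma_{c+2}(G)$ together give exactly the bilinearity of the commutator map modulo $\gamma_{c+2}(G)$ that the argument needs. The paper itself offers no proof, only a citation to Kargapolov--Merzlyakov (Lemma 17.2.1), and your argument is precisely the standard one found there.
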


\begin{lemma}\label{le:commutant}
Let $c_0$ be a fixed positive integer. There is a polynomial time algorithm that given a finite group presentation of a group $G$ in the class of nilpotent groups of class $\le c_0$, finds subgroup generators of $[G,G]$.
\end{lemma}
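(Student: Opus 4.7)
The plan is to apply Lemma \ref{le:nilpotent_basis} repeatedly and read off a finite generating set of $[G,G]$ directly from the generators in the presentation, with no computation involving the relators. Write the input as $G = \langle x_1,\ldots, x_n \mid r_1,\ldots, r_m\rangle$ with $G$ nilpotent of class $\le c_0$, so that $\gamma_{c_0+1}(G) = 1$. Applying Lemma \ref{le:nilpotent_basis} first to $\gamma_2(G)$, then to $\gamma_3(G)$, and so on up to $\gamma_{c_0}(G)$, and then iterating into each preceding layer, one obtains that $[G,G] = \gamma_2(G)$ is generated as a subgroup by the finite set
$$S \;=\; \bigl\{\,[x_{i_1}, x_{i_2}, \ldots, x_{i_c}] \;:\; 2 \le c \le c_0,\ \ 1 \le i_1,\ldots,i_c \le n\,\bigr\}.$$
The algorithm just enumerates $S$.

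Next I would verify the time bound. For a fixed $c_0$, the cardinality $|S| = \sum_{c=2}^{c_0} n^c$ is polynomial in $n$, hence polynomial in the input size. Each element of $S$, once an iterated commutator is written out as a word in the letters $x_j^{\pm 1}$ by induction using $[w,x] = w^{-1}x^{-1}wx$, has length at most $2^{c_0+1}$, i.e.\ bounded by a constant depending only on $c_0$. Therefore the total output has polynomial size, and enumerating the tuples $(i_1,\ldots,i_c)$ and spelling out the corresponding commutator words is straightforwardly done in polynomial time.

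There is essentially no obstacle beyond the indexing bookkeeping: the work is entirely done by Lemma \ref{le:nilpotent_basis}, which collapses the task to an enumeration once $c_0$ is fixed. The relators $r_1,\ldots,r_m$ of $G$ play no role in producing the output; their only role is to guarantee that $\gamma_{c_0+1}(G)=1$, which is what makes the iteration terminate after $c_0-1$ steps and thus lets $S$ suffice as a subgroup generating set.
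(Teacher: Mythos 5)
Your proof is correct and follows essentially the same route as the paper: the paper also derives the lemma by iterating Lemma \ref{le:nilpotent_basis} down to $\gamma_{c_0+1}(G)=1$ and observing that the at most $n^{c_0+1}$ iterated commutators $[x_{i_1},\ldots,x_{i_c}]$, $c\le c_0$, generate $[G,G]$, with the relators playing no role. Your write-up just spells out the enumeration and the constant bound on the commutator word lengths more explicitly.
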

\begin{proof}
Follows from Lemma~\ref{le:nilpotent_basis} by an inductive construction since there are at most $n^{c_0+1}$ iterated commutators $[x_{i_1},\ldots, x_{i_{c}}]$, $c\le c_0$, in a group generated by $n\ge 2$ elements $x_1,\ldots,x_n$ (the case $n=1$ is obvious).
\end{proof}

\begin{theorem}\label{th:equalizer}
Let $c_0$ be a fixed positive integer. Then there is a polynomial time algorithm that given positive integers $c_H, c_G\le c_0$, finite presentations of groups $H,G$ in the classes of nilpotent groups of class $c_H$ and $c_G$, respectively, and homomorphisms $\phi, \psi \in Hom(H,G)$ computes the generating set of the equalizer $E(H,\phi,\psi)$ as a subgroup of $H$.
\end{theorem}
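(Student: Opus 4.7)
The plan is to induct on $c_G$, the nilpotency class of $G$; since $c_0$ is fixed, the recursion has depth at most $c_0$. In the base case $c_G=1$ the group $G$ is abelian, so $\xi(h):=\phi(h)\psi(h)^{-1}$ defines a homomorphism $\xi\colon H\to G$ with $E(H,\phi,\psi)=\ker\xi$. Applying Corollary \ref{co:PCP_abelian_2}(1) to $\xi$ produces, in polynomial time, generators of $\ker\xi$ modulo $[H,H]$, and Lemma \ref{le:commutant} produces generators of $[H,H]$; together these generate $E(H,\phi,\psi)$.

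For the inductive step $c_G\geq 2$, set $Z=\gamma_{c_G}(G)$ and $\bar G=G/Z$; then $Z$ is central and abelian, and $\bar G$ is nilpotent of class $c_G-1$. Let $\bar\phi,\bar\psi\colon H\to\bar G$ be the induced homomorphisms. By induction, generators of $\bar E:=E(H,\bar\phi,\bar\psi)$ are obtained in polynomial time. For $h\in\bar E$ we have $\phi(h)\psi(h)^{-1}\in Z$, so the formula $\xi(h)=\phi(h)\psi(h)^{-1}$ defines a map $\xi\colon\bar E\to Z$; centrality of $Z$ makes $\xi$ a homomorphism, since
\[
\xi(h_1h_2)=\phi(h_1)\,\xi(h_2)\,\psi(h_1)^{-1}=\xi(h_2)\,\phi(h_1)\psi(h_1)^{-1}=\xi(h_1)\xi(h_2).
\]
Because $E(H,\phi,\psi)=\ker\xi$ and $Z$ is abelian, Corollary \ref{co:PCP_abelian_2}(1) applied to $\xi$ yields generators of $\ker\xi$ modulo $[\bar E,\bar E]$, and Lemma \ref{le:commutant} gives generators of $[\bar E,\bar E]$; their union generates $E(H,\phi,\psi)$.

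The main obstacle is maintaining polynomial-time bookkeeping along the recursion. To invoke Corollary \ref{co:PCP_abelian_2} and Lemma \ref{le:commutant} on $\bar E$, one needs not merely a generating set but an actual finite nilpotent presentation of $\bar E$ (of class $\leq c_H\leq c_0$), together with a finite abelian presentation of $Z=\gamma_{c_G}(G)$, both computable in polynomial time. The former is handled by standard polycyclic-group machinery (Mal'cev coordinates and Hall collection) that converts a subgroup generating set into a presentation; the latter follows from Lemma \ref{le:nilpotent_basis}, since only polynomially many iterated commutators of length $\leq c_0$ must be enumerated, after which Proposition \ref{prop:abelian} canonicalises the resulting abelian presentation. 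Because the recursion has constant depth $\leq c_0$, it suffices to ensure a uniform polynomial control on the bit-lengths and sizes of the presentations propagating through each layer, which is the delicate but routine part that must be verified using the standard complexity bounds on these polycyclic-group algorithms.
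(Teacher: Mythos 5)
Your proposal is correct and follows essentially the same route as the paper's proof: induction on $c_G$, passing to $\bar G=G/\gamma_{c_G}(G)$, and reducing to the abelian case via the homomorphism $\xi(h)=\phi(h)\psi(h)^{-1}$ from the equalizer of the induced maps into the central abelian subgroup $\gamma_{c_G}(G)$, with Corollary \ref{co:PCP_abelian_2} and Lemma \ref{le:commutant} finishing the step. You are in fact slightly more explicit than the paper on two points it glosses over --- the verification that centrality makes $\xi$ a homomorphism, and the need to convert the generating set of $\bar E$ into an actual presentation before the abelian machinery can be invoked.
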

\begin{proof}
Let $Y$ and $Z$ be finite generating sets of $H$ and $G$, respectively. We use  induction on the nilpotency class $c=c_G$  of $G$. If $c = 1$ then $G$ is abelian and the result follows from Corollary~\ref{co:PCP_abelian_2} and~\ref{co:PCP_abelian}.

Suppose now that $c >1$ and we  are given $\phi, \psi \in \Hom(H,G)$. Consider the quotient group $\bar G = G/\gamma_c(G)$, which is a nilpotent group of class $c-1$. The homomorphisms $\phi, \psi$ induce some homomorphisms $\phi', \psi' \in \Hom(H, \bar G)$.  Observe that the size of $\phi', \psi'$ (the total length of the images $\phi(y), \psi(y), y \in Y$ as words in $Z$) is the same as of $\phi, \psi$. Also observe that $\bar G$ is described in the class of nilpotent groups of class $c-1$ by the same presentation that describes $G$ in the class of nilpotent groups of class $c$. By induction we can compute in polynomial time a finite generating set, say $L = \{h_1, \ldots, h_k\}$,  of $E'=E(H,\phi',\psi')$ as a subgroup of $H$.  By construction,  for $g\in E'$ one has $\phi(g)  = \psi(g) \mod \gamma_c(G)$, hence  a map  $\xi(g) = \phi(g)\psi(g)^{-1}$ defines a homomorphism $\xi:E' \to \gamma_c(G)$.
Obviously, $E(\phi,\psi) = \ker \xi$. Further, note that the size of $L$ is polynomial in terms of size the input, and the size of a generating set for $\gamma_c(G)$ is polynomial (of degree that depends on $c$) in terms of size of a generating set for $G$ by Lemma~\ref{le:nilpotent_basis}. Now the result follows from Corollary \ref{co:PCP_abelian_2}, item 1), since $\gamma_c(G)$ is abelian, and Lemma~\ref{le:commutant}

\end{proof}

\begin{theorem}\label{th:nilp_P} Let $c$ be a fixed positive integer.
\begin{itemize}
\item [1)] Let $G$ be a finitely generated nilpotent group of class $c$. Then for any $\phi, \psi \in \Hom(F_n,G)$ the subgroup $E(\phi,\psi) \leq F_n$ contains $\gamma_{c+1}(F_n)$ and is finitely generated modulo $\gamma_{c+1}(F_n)$.
\item [2)]  There is a polynomial time algorithm that given a positive integer $n$, a presentation of a group $G$ in the class of nilpotent groups of class $c$ and homomorphisms $\phi, \psi\in \Hom(F_n,G)$ computes a finite set of generators of $E(\phi,\psi)$ in $F_n$ modulo the subgroup $\gamma_{c+1}(F_n)$.
\end{itemize}
\end{theorem}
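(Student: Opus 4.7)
The plan is to reduce this result to Theorem~\ref{th:equalizer} applied to the free nilpotent group $H = F_n/\gamma_{c+1}(F_n)$ of class $c$ and rank $n$, using the fact that any homomorphism from $F_n$ into a nilpotent group of class $c$ factors through $H$.

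For part 1), observe that if $w \in \gamma_{c+1}(F_n)$ then, since $\phi$ and $\psi$ are group homomorphisms, $\phi(w), \psi(w) \in \gamma_{c+1}(G) = 1$ (because $G$ has nilpotency class $c$), so $\phi(w) = \psi(w) = 1$ and $w \in E(\phi,\psi)$. Thus $\gamma_{c+1}(F_n) \subseteq E(\phi,\psi)$. For finite generation modulo $\gamma_{c+1}(F_n)$, note that $F_n/\gamma_{c+1}(F_n)$ is a finitely generated nilpotent group, hence polycyclic, so every subgroup (in particular the image of $E(\phi,\psi)$) is finitely generated.

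For part 2), I would proceed as follows. First, construct a presentation of $H = F_n/\gamma_{c+1}(F_n)$ in the class of nilpotent groups of class $c$ on generators $x_1,\ldots,x_n$ whose relators are the iterated commutators $[x_{i_1},\ldots,x_{i_{c+1}}]$; by Lemma~\ref{le:nilpotent_basis} there are at most $n^{c+1}$ of them, and since $c$ is fixed each one has length $O(2^{c+1}) = O(1)$, so the presentation has polynomial size in $n$ and can be written down in polynomial time. Next, since $\phi(\gamma_{c+1}(F_n)) \subseteq \gamma_{c+1}(G) = 1$ and similarly for $\psi$, the maps $\phi,\psi$ descend to homomorphisms $\bar\phi,\bar\psi \in \Hom(H,G)$ specified by exactly the same images of the generators as $\phi,\psi$, so they are available at no computational cost. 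Then apply Theorem~\ref{th:equalizer} to compute a finite generating set of $E(H,\bar\phi,\bar\psi)$ in polynomial time; lifting each generator to the corresponding word in $F_n$ yields a finite set of generators of $E(\phi,\psi)$ modulo $\gamma_{c+1}(F_n)$, since an element of $F_n$ lies in $E(\phi,\psi)$ if and only if its image in $H$ lies in $E(H,\bar\phi,\bar\psi)$.

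The main obstacle I expect is bookkeeping the polynomial-time bound: I need to verify that the input size presented to Theorem~\ref{th:equalizer} (the presentation of $H$, the presentation of $G$, and the two induced homomorphisms) is polynomial in the size of the original input ($n$ in unary, the presentation of $G$, and the two tuples of images $\phi(x_i), \psi(x_i) \in G$). Since $c$ is fixed, the presentation of $H$ has size polynomial in $n$, the images are unchanged, and the presentation of $G$ is carried over unaltered, so this bookkeeping goes through; Theorem~\ref{th:equalizer} then runs in polynomial time, and the final lift is free.
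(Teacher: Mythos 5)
Your proposal is correct and follows essentially the same route as the paper: both reduce to the free nilpotent group $F_n/\gamma_{c+1}(F_n)$ of class $c$ and rank $n$, push $\phi,\psi$ down to induced homomorphisms on that quotient, and invoke Theorem~\ref{th:equalizer}. The only cosmetic difference is that you write explicit commutator relators for $H$ in the class of nilpotent groups of class $c$, where none are needed (the empty presentation already defines the relatively free group in that class), but this does not affect correctness or the polynomial bound.
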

\begin{proof}
Let $F_n = F_n(X)$, where $X = \{x_1, \ldots,x_n\}$. Fix two homomorphisms $\phi, \psi \in \Hom(F_n,G)$. Since $G$ is nilpotent of class $c$ one has $\gamma_{c+1}(G) = 1$, so $E(\phi,\psi) \geq \gamma_{c+1}(F_n)$.  The quotient $N_{n,c} = F_n/\gamma_{c+1}(F_n)$ is a  finitely generated free nilpotent group of rank $n$ and  class $c$, hence every its subgroup, in particular the image ${\bar E}$ of $E(\phi,\psi)$, is finitely generated.  It follows that the group $E(\phi,\psi)$ is finitely generated modulo $\gamma_{c+1}(F_n)$. This proves 1). Notice, that the argument above allows one to reduce everything to the case of nilpotent groups, i.e., to consider the induced homomorphisms $\bar \phi, \bar \psi \in \Hom(N_{n,c}, G)$, instead of $\phi, \psi$, and the subgroup $\bar E$ instead of $E(\phi,\psi)$. Now the result follows from Theorem \ref{th:equalizer}.

\end{proof}

\begin{theorem} \label{th:nilpotent}
Let $G$ be a finitely generated nilpotent group.
Then $\PCP_n(G) \in \P$ for every $n \in \mathbb{N}$.
\end{theorem}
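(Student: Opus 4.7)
The plan is to reformulate $\PCP_n(G)$ via equalizers of homomorphisms and then feed the resulting pair into Theorem~\ref{th:nilp_P}. An instance $(u_1,v_1),\ldots,(u_m,v_m)$ with $m \le n$ determines homomorphisms $\phi,\psi \in \Hom(F_m,G)$ by $\phi(x_i)=u_i$ and $\psi(x_i)=v_i$. A non-trivial solution is precisely an element of the equalizer $E(\phi,\psi) \le F_m$ that is \emph{not} an identity of $G$. Denoting by $V_m(G) \trianglelefteq F_m$ the verbal subgroup of all identities of $G$ in $m$ variables, this is to say $E(\phi,\psi) \not\subseteq V_m(G)$.

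Let $c$ be the nilpotency class of $G$. The first step is to apply Theorem~\ref{th:nilp_P}(2) to compute, in polynomial time, a finite set $\{w_1,\ldots,w_k\} \subseteq F_m$ such that
$$E(\phi,\psi) \;=\; \langle w_1,\ldots,w_k\rangle \cdot \gamma_{c+1}(F_m).$$
Since $G$ is nilpotent of class $c$, every element of $\gamma_{c+1}(F_m)$ is an identity of $G$, so $\gamma_{c+1}(F_m) \subseteq V_m(G)$. It follows that $E(\phi,\psi) \subseteq V_m(G)$ if and only if each generator $w_i$ lies in $V_m(G)$. Hence the instance admits a non-trivial solution if and only if \emph{some} $w_i$ fails to be an identity of $G$.

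The second step is to test, for each $i$, whether $w_i$ is an identity of $G$. A word $w \in F_m$ is an identity of $G$ iff $w = 1$ in the relatively free group $F_{G,m}$ of rank $m$ in $\Var(G)$. Since $G$ is finitely generated nilpotent and $m \le n$ is bounded, $F_{G,m}$ is a fixed finitely generated nilpotent group (a quotient of the free nilpotent group $N_{m,c}$, depending only on $G$ and $m$), so its word problem admits a polynomial-time algorithm via the Mal'cev basis. Checking each $w_i$ in turn, and returning ``yes'' as soon as some $w_i$ is non-trivial in $F_{G,m}$ (and ``no'' otherwise), yields the required polynomial-time decision procedure.

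The core computation is already encapsulated in Theorem~\ref{th:nilp_P}; the only additional ingredients are the inclusion $\gamma_{c+1}(F_m) \subseteq V_m(G)$ and the reduction of ``is $w$ an identity of $G$?'' to the word problem in the fixed group $F_{G,m}$. The latter is the most delicate point, since a priori one might fear that distinguishing identities of $G$ from non-identities requires an effective description of the laws of $G$; the resolution is that $F_{G,m}$ itself is finitely generated nilpotent, so only a (fixed) polynomial-time word problem in it, rather than an explicit list of laws, is needed.
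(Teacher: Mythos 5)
Your proposal is correct and follows the same skeleton as the paper's proof: both invoke Theorem~\ref{th:nilp_P} to compute in polynomial time words $h_1,\ldots,h_k$ with $E(\phi,\psi)=\langle h_1,\ldots,h_k,\gamma_{c+1}(F_m)\rangle$, note that $\gamma_{c+1}(F_m)$ consists of identities of $G$, and reduce the decision to a test on the finitely many generators. The difference is the test itself. The paper checks whether some $\phi(h_i)\neq 1$ in $G$ (so it needs only the word problem in $G$), whereas you check whether some $h_i$ fails to be an identity of $G$, i.e.\ whether $h_i\neq 1$ in the relatively free group $F_{G,m}$. These tests are not equivalent: for $G=\mathbb{Z}$ and the instance $(1,2),(-1,-2)$, the word $x_1x_2$ lies in the equalizer and is not an identity of $\mathbb{Z}$, yet $\phi(x_1x_2)=0$. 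Under the paper's stated definition (a non-trivial solution is any $w\in E(\phi,\psi)$ that is not an identity of $G$), your criterion is the faithful one; the paper's test instead detects the stronger property that some equalizer element evaluates non-trivially on the given instance, and its ``otherwise'' step ($\phi(E(\phi,\psi))=1$ hence no non-trivial solution) does not follow from the definition as stated. The price of your route is the extra ingredient you correctly isolate: deciding ``is $w$ an identity of $G$'' via the word problem in $F_{G,m}$. Since $G$ and $n$ are fixed, each $F_{G,m}$ with $m\le n$ is a fixed finitely generated nilpotent group, so a polynomial-time word problem for it is available; you should just make explicit that a Mal'cev basis or finite presentation of $F_{G,m}$ is hard-coded as (non-uniform) preprocessing depending only on $G$ and $n$, not computed from the input.
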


\begin{proof}
Indeed, by Theorem \ref{th:nilp_P} one can compute in $\P$-time a finite set of elements $h_1, \ldots,h_m \in F_n$ such that $E(\phi,\psi) = \langle h_1, \ldots,h_m, \gamma_{c+1}(F_n) \rangle$. Now the instance of $\PCP_n$ defined by $(\phi, \psi)$ has a non-trivial solution in $G$ if and only if there is $i$ such that $\phi(h_i) \neq 1$ in $G$. Indeed, in this case $\phi(h_i) = \psi(h_i) \neq 1$ in $G$. Otherwise, $\phi(E(\phi,\psi)) = 1$ in $G$ so there is no a non-trivial solution in $G$ to the instance of $\PCP_n$ determined by $\phi$ and $\psi$. This proves the theorem.
\end{proof}

\bibliography{main_bibliography}

\end{document}